\definecolor{dkgreen}{rgb}{0,0.6,0}
\definecolor{gray}{rgb}{0.5,0.5,0.5}
\DeclareMathOperator{\sgn}{sgn}
\date{\today}
\newtheorem{theorem}{Theorem}[section]
\newtheorem{lemma}[theorem]{Lemma}
\newtheorem{remark}[theorem]{Remark}
\numberwithin{equation}{section}
\begin{document}

\title{An Iteratively Reweighted Least Squares Algorithm for Sparse Regularization}


\author{Sergey Voronin}
\address{Mathematics, Tufts University, Medford, MA, 02155, USA.}
\author{Ingrid Daubechies}
\address{Mathematics, Duke University, Durham, NC, 27708, USA.}

\date{\today}

\begin{abstract}
We present a new algorithm and the corresponding convergence analysis for the 
regularization of linear inverse problems with sparsity constraints, applied to a 
new generalized sparsity promoting functional. The algorithm 
is based on the idea of iteratively reweighted least squares, reducing the minimization 
at every iteration step to that of a functional including only $\ell_2$-norms. 
This amounts to smoothing of the absolute value function that appears in the 
generalized sparsity promoting penalty we consider, with the smoothing becoming 
iteratively less pronounced. We demonstrate that the sequence of iterates of our 
algorithm converges to a limit that minimizes the original functional. 
\end{abstract}

\maketitle

\section{Introduction}
Over the last several years, an abundant number of algorithms 
(e.g. \cite{ingrid_thresholding1,cai2009linearized,yin2008bregman,yang2011alternating}) 
have been proposed for the minimization of the $\ell_1$-penalized  
functional $F_{\tau}(x) = \|Ax - b\|_2^2 + 2 \tau \|x\|_1$, 
where the matrix $A$, the vector $x$ and the constant $\tau$
are, respectively, in $\mathbb{R}^{M \times N}$, $\mathbb{R}^N$ and 
$\mathbb{R_{+}}$. This functional has a number of interesting 
applications, such as image restoration \cite{figueiredo2007majorization}, 
face recognition \cite{yang2010fast}, and in inverse problems 
from geophysics \cite{simons2011solving}; one can also view the recovery of corrupted 
low rank matrices as a generalization (since it typically penalizes the $\ell_1$ norm of the 
singular values, i.e. the nuclear norm of the matrix) \cite{wright2009robust}. 
The $\|x\|_1 = \sum_{k=1}^N |x_k|$ penalty is the 
closest norm to the $\ell_0$-penalty (the count of non-zeros in a signal), 
and the relationship between the two has been brought into focus by compressive 
sensing \cite{cande2008introduction}. Since $\|x\|_1$ is not 
differentiable due to the absolute value function $|\cdot|$, standard gradient based 
techniques cannot be directly applied for the minimization of $F_{\tau}$. 
In this paper, we consider a more general functional of which $F_{\tau}$ is a 
particular case. The new functional introduced in \cite{sv_thesis} which the algorithm 
in this paper can minimize is $F_{\bf{q},\pmb{\lambda}}(x)$:
\begin{equation*}
F_{\bf{q},\pmb{\lambda}}(x) = \|Ax - b\|_2^2 + 2 \displaystyle\sum_{k=1}^N \lambda_k |x_k|^{q_k}
\end{equation*} 
where the coefficients $q_k$ and $\lambda_k$ may be different for each $1 \leq k \leq N$, 
with $1 \leq q_k \leq 2$ for each $k$. The more general functional makes it 
possible to treat different components of $x$ differently, corresponding 
to their different roles. A simple example with a half 
sparse, half dense signal is illustrated in the Numerics section; in that case, 
imposing a sparsity inducing penalty on all coefficients is not ideal for proper 
recovery. Another important instance is the case when the penalization 
contains a multiscale representation (e.g. the wavelet decomposition) of an 
object to be reconstructed/approximated. In this case, one has an extra matrix $W$, 
representing the transform to wavelet coefficients, and the minimization problem 
for $w = Wx$ takes the form: 
\begin{equation*}
\bar{w} = \arg\min_w \left\{ \|A W^{-1} w - b\|_2^2 + \displaystyle\sum_{k=1}^N \lambda_k |w_k|^{q_k} \right\}
\end{equation*} 
If $W$ is a wavelet transform, then the different entries of the vector $w$ 
fulfill distinctly different roles, with some being responsible for coarse scales and 
others for fine details. In this case, the total number of possible coefficients 
corresponding to coarse scales is typically quite limited, with each 
of them crucial to the overall model (e.g. \cite{simons2011solving}). 
Thus, we do not necessarily want to impose a sparsity-promoting penalty on these 
coefficients, which means we would choose $q_k > 1$ for 
them in the penalty function. On the other hand, the coefficients corresponding to 
fine scales are typically quite sparse in the object to be reconstructed, and the inversion 
procedure might, without appropriate regularization, be prone to populate them 
with noisy features; in this case, a sparsity promoting choice 
$q_k = 1$ would be indicated for those $k$. 
 
Two approaches are commonly used by various algorithms for the minimization of functionals that,
like $F_{\bf{q},\pmb{\lambda}}$, involve a non-smooth absolute value term. 
The first approach handles the non-smooth minimization problem directly. 
For instance, for our original $F_{\tau}$, one uses the soft-thresholding 
operation \cite{ingrid_thresholding1} on $\mathbb{R}$, defined by: 
\begin{equation*}\label{Sthr} S_{\tau }(x) = \left\{%
\begin{array}{ll}
    x - \tau, & \hbox{$x\ge \tau$;} \\
    0, & \hbox{$-\tau \le x \le \tau$;} \\
    x + \tau, & \hbox{$x \le -\tau$ .} \\
\end{array}%
\right.\end{equation*}
For a vector of $N$ elements, soft-thresholding is then defined component-wise by setting,
$\forall\,k = 1, \dots, N$, 
$(\mathbb{S}_{\tau}(x))_k = S_{\tau} (x_k)$.
The use of soft-thresholding relies on the identity  
$S_{\tau}(\beta) = \arg\min_a \left\{ (a - \beta)^2 + 2\tau |a| \right\}$ for scalars $a$ and 
$\beta$, which for vectors $x$ and $b$ translates to:
\begin{equation}
\label{eq:soft_thresholding_min_def}
\mathbb{S}_{\tau}(b) = \arg\min_x \left\{ \|x - b\|_2^2 + 2\tau\|x\|_1 \right\}
\end{equation}
The simplest example is the Iterative Soft Thresholding Algorithm (ISTA) 
\cite{ingrid_thresholding1}:
\begin{equation}
\label{eq:ista_scheme}
x^{n+1} = \mathbb{S}_{\tau}(x^n + A^T b - A^T A x^n)
\end{equation}
which for an initial $x^0$ and with $\|A\|_2 < 1$ (easily accomplished by rescaling;
$\|A\|_2$ is the operator norm of $A$ from $\ell_2$ to $\ell_2$, also called
the spectral norm of $A$), converges slowly but surely 
to the $\ell_1$-minimizer. A faster variation on this scheme, known as FISTA 
\cite{Beck2009}, is frequently employed; the thresholding function 
can also be adjusted to correspond to more general penalties \cite{ICASSP_paper}. 
Along the same line of thinking, algorithms based on the dual space of the 
$\ell_1$-norm have been proposed \cite{yang2011alternating}, 
with the dual being the $\ell_{\infty}$-norm.

The second approach to algorithms minimizing the $\ell_1$-based functional involves 
some kind of smoothing. One idea is to replace the entire functional by a smooth 
approximation. This can be done, for instance, by convolving the absolute 
value function with narrow Gaussians \cite{2014arXiv1408.6795V}. 
This approach then allows for the use of standard gradient based methods 
(such as Conjugate Gradients) for the 
minimization of the approximate smooth functional. The main problem 
with this approach is that we are then minimizing a slightly different 
functional from the original that does not necessarily have the same 
properties that the original penalty possesses. 

The algorithm described in this paper replaces the $|x_k|^{q_k}$ term 
in $F_{\bf{q},\pmb{\lambda}}$ with a smoothened version that tends to the 
original as the iterates progress towards the limit. 
This algorithm builds upon the original iteratively 
reweighted least squares (or IRLS) method proposed in \cite{daubechies2010iteratively} (as 
well as earlier work in \cite{figueiredo2007majorization,fuchs2007convergence}), 
extending it to the unconstrained case and to a more general penalty.
The idea can be illustrated simplest for the $q_k = 1$ case. 
Consider the approximation:
\begin{equation*}
|x_k| = \frac{x_k^2}{|x_k|} = \frac{x_k^2}{\sqrt{x_k^2}} \approx \frac{x_k^2}{\sqrt{x_k^2 + \epsilon^2}}
\end{equation*}
where in the rightmost term, a small $\epsilon \neq 0$ is used, 
to insure the denominator is finite, regardless of the value of $x_k$. 
Thus, at the $n$-th iteration, a reweighted $\ell_2$-approximation to the 
$\ell_1$-norm of $x$ is of the form:
\begin{equation*}
\|x\|_1 \approx \displaystyle\sum_{k=1}^N \frac{x_k^2}{\sqrt{(x^n_k)^2 + \epsilon_n^2}} = \displaystyle\sum_{k=1}^N \tilde{w}^n_k x_k^2
\end{equation*}
where the right hand side is a reweighted two-norm with weights:
\begin{equation}
\label{eq:irls_scheme_weights_ell1}
\tilde{w}^n_k = \frac{1}{\sqrt{(x^n_k)^2 + \epsilon_n^2}}.
\end{equation}
Clearly, it follows that $\sum_k \tilde{w}^n_k (x^n_k)^2$ is a close approximation
to $\|x^n\|_1$. In the same way, we can use the slightly more general weights:
\begin{equation}
\label{eq:irls_scheme_weights}
w^n_k = \frac{1}{\left[(x^n_k)^2 + \epsilon_n^2\right]^\frac{2-q_k}{2}}.
\end{equation}
for the approximation $|x^n_k|^{q_k} \approx w^n_k (x^n_k)^2$ to hold; 
these can then deal with the case $1 \leq q_k \leq 2$.

We shall use a sequence $\{\epsilon_n\}$ such that $\epsilon_n \to 0$ as 
$n \to \infty$. We note that it is important that $\epsilon_n > 0$ for all $n$ for a 
rigorous convergence proof. In an approach where $\epsilon_n = 0$, entries $k$ for 
which $x^n_k = 0$ would lead to diverging $\tilde{w}^n_k$ and hence to 
$x_k^{n^\prime} = 0$ for all subsequent $n^{\prime} > n$. This is OK if the $k$-th entry 
of the minimizer is indeed zero; if (as is typically the case) this cannot be guaranteed, 
convergence would fail. 

A precise choice of the sequence 
$\{\epsilon_n\}$ is important for convergence analysis. Although in practice, 
different approaches can work, the rate at which the $\epsilon$-sequence converges needs to match that 
of the iterates $x^n$. In our analysis, we will use the following definition:
\begin{equation}
\label{eq:irls_scheme_epsilons}
\epsilon_{n} = \min\left(\epsilon_{n-1},\left( \|x^n - x^{n-1}\|_2 + \alpha^n \right)^{\frac{1}{2}}\right), 
\end{equation}
where $\alpha \in (0,1)$ is some fixed number. The resulting algorithm we present and 
analyze is very similar in form to \eqref{eq:ista_scheme}:
\begin{equation}
\label{eq:irls_scheme}
x^{n+1}_k = \frac{1}{1 + \lambda_k q_k w^n_k} \left(x^n_k + (A^T b)_k - (A^T A x^n)_k\right) \quad \mbox{for} \quad k=1,\dots,N,
\end{equation}
with the thresholding replaced by an iteration-dependent scaling operation using 
the weights \eqref{eq:irls_scheme_weights}. The algorithm is found to be numerically 
competitive with the thresholding based schemes for the $\ell_1$-case but has the 
advantage that it can handle the minimization of more general functionals of the 
form $F_{\bf{q},\pmb{\lambda}}$. The main contribution of 
this paper is a detailed proof of convergence, the methodology of which can be readily 
applied to analyze similar schemes. An added advantage of a scheme in which all terms 
are quadratic in the unknown $x$ is that it can be combined with a conjugate gradient 
approach to speed up the algorithm. In \cite{sv_thesis} such an algorithm was 
proposed, and convergence proved if at each reweighted step, the conjugate gradient scheme 
was pursued to convergence. In \cite{2015arXiv150904063F}, the more general and 
more realistic situation is considered, where only some conjugate gradient steps 
are taken at each iteration. In both cases, the choice of $\{ \epsilon_n \}$ 
(e.g. \eqref{eq:irls_scheme_epsilons}), remains crucial for the convergence 
analysis.

\section{Constructions}

\subsection{Analysis of the generalized sparsity inducing functional}
Here, we derive and comment on the optimality conditions of the 
functional:
\begin{equation}
\label{eq:ellq_funct}
F(x) = \|Ax - b\|_2^2 + 2 \displaystyle\sum_{k=1}^N \lambda_k |x_k|^{q_k},
\end{equation} 
for the range $1 \leq q_k \leq 2$, where in \eqref{eq:ellq_funct}, we drop the 
subscripts $\bf{q}$ and $\pmb{\lambda}$ for convenience. 
Notice that since \eqref{eq:ellq_funct} 
is convex for the range of $q_k$ specified, every local minimizer is a 
global minimizer of the functional. The optimality conditions for a general vector $x$ with 
components $x_k$ for $k \in (1, \dots, N)$ can be written down in component-wise form, 
as derived in Lemma \ref{lem:minFellqcharacter} 
below. Note that as $F_{\bf{1},\lambda}$ is a special case of 
$F_{\bf{q},\pmb{\lambda}}$, the component-wise conditions below 
reduce to the well known optimality conditions of the $\ell_1$ 
penalized functional when $q_k = 1$ for all $k$.   
\begin{lemma}
\label{lem:minFellqcharacter}
The conditions for the minimizer of the functional $F(x)$ as defined 
in \eqref{eq:ellq_funct} are:
\begin{equation}
\label{eq:optcond}
\begin{array}{rll}
\{A^T (b - Ax)\}_k &= \lambda_k \sgn(x_k) q_k |x_k|^{q_k-1} \,,& x_k \neq 0 \quad (1 \leq q_k \leq 2) \\                         
\{A^T (b - Ax)\}_k &= 0 \,,& x_k = 0 \quad (q_k > 1) \\                                               
\left|\{A^T (b - Ax)\}_k\right| &\leq \lambda_k \,,& x_k = 0 \quad (q_k = 1)                           
\end{array}
\end{equation}
\end{lemma}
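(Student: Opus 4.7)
The plan is to derive the conditions from the characterization of the minimizer of a convex function via its subdifferential: since $F$ is convex (as a sum of convex terms for $q_k \ge 1$), $x$ is a global minimizer if and only if $0 \in \partial F(x)$. I would first split $F$ into its smooth part $G(x) = \|Ax-b\|_2^2$ and its separable, non-smooth part $H(x) = 2\sum_k \lambda_k |x_k|^{q_k}$. The gradient of $G$ is classically $\nabla G(x) = 2 A^T(Ax-b)$, and since $G$ is smooth, $\partial F(x) = \nabla G(x) + \partial H(x)$, with $\partial H(x)$ decoupling coordinatewise because $H$ is separable.

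Next I would compute $\partial |x_k|^{q_k}$ in each regime. For $x_k \neq 0$ and any $q_k \in [1,2]$, the function is smooth at $x_k$, giving the single-valued derivative $q_k \sgn(x_k) |x_k|^{q_k - 1}$. For $x_k = 0$ and $q_k > 1$, the function $t \mapsto |t|^{q_k}$ is differentiable at $0$ with derivative $0$ (the one-sided limits of $q_k|t|^{q_k-1}\sgn(t)$ both vanish), so the subdifferential is $\{0\}$. For $x_k = 0$ and $q_k = 1$, the classical subdifferential of $|\cdot|$ at $0$ is the interval $[-1,1]$.

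Then I would write the optimality condition $0 \in \partial F(x)$ componentwise: the $k$-th component equation reads $2(A^T(Ax-b))_k + 2\lambda_k \,\partial|x_k|^{q_k} \ni 0$, equivalently $(A^T(b-Ax))_k \in \lambda_k \, \partial|x_k|^{q_k}$. Substituting the three subdifferential expressions computed above produces exactly the three cases of \eqref{eq:optcond}: the equality with $\lambda_k q_k \sgn(x_k)|x_k|^{q_k-1}$ when $x_k \neq 0$, the equality with $0$ when $x_k = 0$ and $q_k > 1$, and the inclusion $|(A^T(b-Ax))_k| \leq \lambda_k$ when $x_k = 0$ and $q_k = 1$.

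The only nontrivial point — and the one I would be careful to justify — is the subdifferential of $|t|^{q_k}$ at $t=0$ for $1 < q_k \le 2$: here one must verify that the function is genuinely differentiable at $0$ with derivative $0$, so that no spurious interval shows up and the condition collapses to an equality $(A^T(b-Ax))_k = 0$ rather than an inequality. Everything else is a routine application of convex subdifferential calculus, and there is no issue with summing subgradients because $G$ is smooth so no qualification condition is needed.
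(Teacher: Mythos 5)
Your proof is correct, but it takes a genuinely different route from the paper. You invoke convex subdifferential calculus: writing $F = G + H$ with $G(x)=\|Ax-b\|_2^2$ smooth and $H$ separable, applying the sum rule $\partial F = \nabla G + \partial H$ (valid without any qualification condition since $G$ is differentiable everywhere), and reducing everything to the componentwise subdifferentials of $t\mapsto |t|^{q_k}$. The paper instead argues from first principles: it writes out $F(x)\leq F(x+tz_ke_k)$ for small $t$, expands $|x_k+tz_k|^{q_k}$ by Taylor's theorem when $x_k\neq 0$, and compares the orders of $|t|$ in the resulting inequality to extract each condition, treating the cases $x_k\neq 0$, $x_k=0$ with $q_k>1$, and $x_k=0$ with $q_k=1$ separately. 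The two arguments isolate the same three regimes and arrive at identical conditions; the key analytic fact you flag---that $|t|^{q_k}$ is genuinely differentiable at $0$ with derivative $0$ when $q_k>1$, so the condition collapses to an equality---is exactly mirrored in the paper by the observation that the terms $|t|^2$ and $|t|^{q_k}$ are both $o(|t|)$, forcing $\{A^T(Ax-b)\}_k=0$. What your approach buys is brevity and an immediate ``if and only if'' (since $0\in\partial F(x)$ characterizes global minimizers of a convex function), at the cost of citing the sum rule and the subdifferential of the absolute value as known results; the paper's approach is longer but entirely self-contained, needing only elementary calculus and the convexity of $F$ to upgrade the local condition to a global one.
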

\begin{proof}
Since for the case $1 \leq q_k \leq 2$, $F(x)$ is convex, any local minimizer is necessarily
global. Thus, to characterize the minimizer, it is necessary only to work 
out the conditions corresponding to 
$F(x) \leq F(x + tz)$ for all sufficiently small $t \in \mathbb{R}$ and all $z\in\mathbb{R}^N$. 
$F(x) \leq F(x + tz)$ implies that: 
\begin{equation}
\label{eq:optcondstep1}
t^2 ||A z||^2  + 2t\langle z,A^T (Ax -b) \rangle + 2\displaystyle\sum_{k=1}^N \lambda_k \left( |x_k + t z_k|^{q_k} - |x_k|^{q_k} \right) \geq 0.
\end{equation}
We derive $N$ conditions, one for each index $k\in\{1,\ldots,N\}$; for the $k$-th condition, we consider $z$ of the special form $z=z_k e_k$ (i.e.~ only the $k$-th entry of $z$ differs from 0).
We separately analyze the cases $x_k \neq 0$ and $x_k = 0$, starting with the former.

When $x_k \neq 0$, the function $f(t) = |x_k + tz_k|^{q_k}$ 
is $C^{\infty}$ at $t=0$. Using a Taylor series expansion 
around $0$, we then get $f(t) = f(0) + t f'(0) + O(t^2)$. In addition, 
$\sgn(x_k + t z_k) = \sgn(x_k)$ for sufficiently small $t$. Keeping $t$ fixed we 
analyze both signs of $x_k$. 
For $x_k > 0$, we have $\sgn(x_k) = 1$ and $|x_k + t z_k| = x_k + t z_k$, so that:
\begin{equation*}
f(t) = (x_k + t z_k)^{q_k} \implies f^{\prime}(t) = \sgn(x_k) q_k z_k (x_k + t z_k)^{q_k - 1} = \sgn(x_k) q_k z_k |x_k + t z_k|^{q_k - 1}.
\end{equation*}
When $x_k < 0$, we have $\sgn(x_k) = -1$ and $|x_k + t z_k| = -(x_k + t z_k)$, so that:
\begin{equation*}
f(t) = (-x_k - t z_k)^{q_k} \implies f^{\prime}(t) = - q_k z_k (-x_k - t z_k)^{q_k - 1} = 
\sgn(x_k) q_k z_k |x_k + t z_k|^{q_k - 1}.
\end{equation*}
Thus, $f^{\prime}(0) = \sgn(x_k) q_k z_k |x_k|^{q_k-1}$ for all $x_k \neq 0$.
Thus, there exists a constant $C>0$ such that the Taylor expansion of $f$ becomes:
\begin{eqnarray*}
f(t) 
&=& |x_k + tz_k|^{q_k} 
= |x_k|^{q_k} + t \sgn(x_k) q_k z_k |x_k|^{q_k-1} + O(t^2) \\
&\leq& |x_k|^{q_k} + t \sgn(x_k) q_k z_k |x_k|^{q_k-1} + Ct^2,
\end{eqnarray*}
This implies in particular that 
$|x_k + t z_k|^{q_k} - |x_k|^{q_k} \leq t \sgn(x_k) q_k z_k |x_k|^{q_k-1} + Ct^2$. 
Using this and $z= z_k e_k$ in \eqref{eq:optcondstep1} gives:
\begin{eqnarray*}
t^2 \|A (z_k e_k)\|^2  + 2t\langle z_k e_k,A^T (Ax -b)\rangle + 2 \lambda_k \left( t \sgn(x_k) q_k z_k |x_k|^{q_k-1} + C t^2 \right) \geq 0 \\
\implies \ t^2 \left( \|A (z_k e_k)\|^2 + 2 C \lambda_k \right) + 2t \left( z_k \{A^T (Ax - b)\}_k + \lambda_k \sgn(x_k) q_k z_k |x_k|^{q_k-1} \right) \geq 0 .
\end{eqnarray*}
The first term can be made arbitrary small with respect to the second; the inequality will thus hold for both $t>0$ and $t<0$ iff:
\begin{equation*}
z_k \{A^T (Ax - b)\}_k + \lambda_k \sgn(x_k) q_k z_k |x_k|^{q_k-1} = 0,
\end{equation*}
which leads to:
\begin{equation*}
\{A^T (b - Ax)\}_k = \lambda_k \sgn(x_k) q_k |x_k|^{q_k-1} \,,\  x_k \neq 0.
\end{equation*}
Note that when $q_k = 1$ we recover the familiar condition for minimization of the $\ell_1$-functional:
\begin{equation*}
\{A^T (b - Ax)\}_k = \lambda_k \sgn(x_k) \,,\ x_k \neq 0.
\end{equation*}

\vspace{2.mm}
When $x_k = 0$, recalling that $z= z_k e_k$, \eqref{eq:optcondstep1} gives:
\begin{equation}
\label{eq:optcondstep2}
t^2 ||A (z_k e_k)||^2  + 2t\langle z_k e_k,A^T (Ax -b)\rangle + 2 \lambda_k |t|^{q_k} |z_k|^{q_k} \geq 0.
\end{equation}
Making the substitutions $t^2 = |t|^2$, $t = |t|\sgn(t)$, 
we obtain
\begin{equation}
\label{eq:optcondstep3}
|t|^2 ||A (z_k e_k)||^2  + |t|\left(2\sgn(t) z_k \{A^T (Ax -b)\}_k + 2 \lambda_k |t|^{q_k-1} |z_k|^{q_k} \right) \geq 0. \end{equation}
In this case, we have to consider the case $q_k = 1$ and $q_k > 1$ separately. When $q_k > 1$ we have that:
\begin{equation*}
|t|^2 ||A z||^2  +  2\lambda_k |t|^{q_k} |z_k|^{q_k} + 2 |t| \sgn(t)z_k \{A^T (Ax -b)\}_k  \geq 0.
\end{equation*}
Since $q_k > 1$, the first two terms on the left have greater powers of $|t|$ than the last term and can be made 
arbitrarily smaller by picking $t$ small enough. This means we must have:
\begin{equation*}
2 \sgn(t) z_k \{A^T (Ax - b)\}_k \geq 0
\end{equation*}
for all $t$, which can be true only if $\{A^T (Ax - b)\}_k = 0$. Thus, we conclude that 
the condition is:
\begin{equation*}
\{A^T (b - Ax)\}_k = 0 \,,\ x_k = 0 \quad (q_k > 1).
\end{equation*}
For $q_k = 1$, applying a similar argument to \eqref{eq:optcondstep3} leads to:
\begin{equation*}
\begin{array}{rrl}
&|t|^2 ||A z||^2  + |t|\left( 2\sgn(t) z_k \{A^T (Ax -b)\}_k + 2\lambda_k |z_k| \right)& \geq 0
\\
\implies& \sgn(t) z_k \{A^T (Ax -b)\}_k + \lambda_k |z_k|& \geq 0 .
\end{array}
\end{equation*}
Now consider the two cases: where $t$ has the same sign as $z_k$,
$\sgn(t)=\sgn(z_k)$ or the opposite sign, $\sgn(t)=-\sgn(z_k)$. They lead to, respectively:
\begin{equation*}
\{A^T (Ax - b) \}_k + \lambda_k \geq 0 \quad \mbox{and} \quad 
-\{A^T (Ax - b) \}_k + \lambda_k \geq 0,
\end{equation*}
so we obtain the condition:
\begin{equation*}
\bigl|\{A^T (b - Ax)\}_k\bigr| \leq \lambda_k \,,\  x_k = 0 \quad (q_k = 1).
\end{equation*}
\vspace{2.mm}
Thus, we can summarize the component-wise conditions for the minimizer of $F(x)$ as in \eqref{eq:optcond}.
\end{proof}

The conditions derived in Lemma \ref{lem:minFellqcharacter} allow us to pick a 
strategy for selecting $\{\lambda_k\}$. As an example, 
for the case $q_k = 1$ and $\lambda_k = \lambda$ for all $k$ 
we have that for $\lambda > \|A^T b\|_{\infty}$, the optimal 
solution is the zero vector. Hence, we typically would start at some 
value of $\lambda$ just below $ \|A^T b\|_{\infty}$ where the zero vector is a good 
initial guess. We can then iteratively decrease $\lambda$ and use the previous 
solution as the initial guess at the next lower $\lambda$ while we go down 
to some target residual. Well-known techniques such as the 
L-curve method \cite{hansen1999curve} apply here.  

\subsection{Derivation of the algorithm}

The iteratively reweighted least squares (IRLS) algorithm given by scheme 
\eqref{eq:irls_scheme} with weights \eqref{eq:irls_scheme_weights} follows from 
the construction of a surrogate functional \eqref{eq:Gdef} which we will use in our 
analysis, as presented in Lemma \ref{lem:irls_surrogate_functional} below.
In our constructions, we split the index set $1 \leq k \leq N$ into two 
parts: $Q_1 = \{k : 1 \leq q_k < 2\}$ and $Q_2 = \{k : q_k = 2\}$.

\begin{lemma}
\label{lem:irls_surrogate_functional}
Define the surrogate functional:
\begin{eqnarray}
\nonumber
\quad
G(x,a,w,\epsilon) &=& \|Ax - b\|_2^2 - \|A(x - a)\|_2^2 + \|x - a\|_2^2 \\
\label{eq:Gdef}
&+& \displaystyle\sum_{k \in Q_1} \lambda_k \left( q_k w_k \left((x_k)^2 + \epsilon^2\right) + (2- q_k) (w_k)^{\frac{q_k}{q_k - 2}} \right) \\
\nonumber
&+& \displaystyle\sum_{k \in Q_2} \left[ 2 \lambda_k \left( (x_k)^2 + \epsilon^2 \right) (w_k^2 - 2 w_k + 2) \right].
\end{eqnarray}
Then the minimization procedure
$~w^{n} = \arg\min_w G(x^{n},a,w,\epsilon_{n})~$
defines the iteration dependent weights:
\begin{equation}
\label{eq:witer}
w^{n}_k = \frac{1}{\left[(x^{n}_k)^2 + (\epsilon_{n})^2\right]^\frac{2-q_k}{2}}.
\end{equation}
In addition, the minimization procedure
$~x^{n+1} = \arg\min_x G(x,x^n,w^n,\epsilon_n)~$
produces the iterative scheme:
\begin{equation}
\label{eq:xiter}
x^{n+1}_k = \frac{1}{1 + \lambda_k q_k w^n_k} \left( (x^n)_k - (A^T A x^n)_k +  (A^T b)_k \right).
\end{equation}
\end{lemma}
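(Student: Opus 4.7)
The proof of Lemma \ref{lem:irls_surrogate_functional} reduces to two independent first-order calculations on a convex function, carried out one coordinate at a time. I would structure the argument in two steps, matching the two minimizations stated in the lemma, and verify convexity along the way so that each critical point is indeed the unique global minimum.

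\emph{Step 1 (minimization in $w$).} Since $G(x^{n},a,\cdot,\epsilon_{n})$ depends on $w$ only through two sums that split cleanly over $k$, each $w_k$ can be optimized separately. For $k \in Q_2$ the $w_k$-dependence is $2\lambda_k((x^n_k)^2 + \epsilon_n^2)[(w_k-1)^2 + 1]$, trivially minimized at $w_k = 1$, which matches \eqref{eq:witer} because $(2-q_k)/2 = 0$. For $k \in Q_1$, I differentiate $\lambda_k\bigl[q_k w_k((x^n_k)^2+\epsilon_n^2) + (2-q_k)(w_k)^{q_k/(q_k-2)}\bigr]$ and use the identity $q_k/(q_k-2) - 1 = 2/(q_k-2)$ to arrive at $(x^n_k)^2 + \epsilon_n^2 = (w_k)^{2/(q_k-2)}$; inverting (using $q_k - 2 < 0$) yields exactly \eqref{eq:witer}. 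A brief second-derivative check confirms this is a minimum: the signs $2-q_k>0$, $q_k/(q_k-2)<0$, $2/(q_k-2)<0$ multiply to a positive coefficient of $(w_k)^{2/(q_k-2)-1}$.

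\emph{Step 2 (minimization in $x$).} The first three terms of $G$ in \eqref{eq:Gdef} simplify dramatically once expanded: the troublesome quadratic $x^T A^T A x$ cancels between $\|Ax-b\|_2^2$ and $-\|A(x-a)\|_2^2$, leaving $\|x\|_2^2 - 2\langle x,\, a - A^T A a + A^T b \rangle$ plus $x$-independent constants. The penalty terms are also separable in $x$, contributing $\lambda_k q_k w^n_k x_k^2$ for $k \in Q_1$ (directly) and, for $k \in Q_2$, $2\lambda_k x_k^2\bigl((w^n_k)^2 - 2 w^n_k + 2\bigr)$, which after substituting $w^n_k = 1$ from Step 1 collapses to $2\lambda_k x_k^2 = \lambda_k q_k w^n_k x_k^2$. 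The total is then a decoupled sum of one-variable convex quadratics, and setting each coordinate gradient to zero gives precisely \eqref{eq:xiter} with $a = x^n$.

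\emph{Expected obstacles.} The proof is essentially routine calculus, so there is no genuine difficulty, but two bookkeeping points deserve care. The first is the exponent manipulation for $k \in Q_1$ in Step 1, where the negative exponent $(q_k-2)/2$ must be tracked consistently through differentiation and inversion. The second is the uniform appearance of the factor $1 + \lambda_k q_k w^n_k$ in \eqref{eq:xiter} for all $k$: when $k \in Q_2$ this only agrees with the direct coordinate derivative after one has plugged in $w^n_k = 1$, and I would flag this collapse explicitly so the reader does not suspect a mismatch between \eqref{eq:Gdef} and \eqref{eq:xiter} in the $q_k = 2$ case.
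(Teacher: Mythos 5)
Your proposal is correct and follows essentially the same route as the paper: coordinate-wise first-order conditions in $w$ and then in $x$, with the index set split into $Q_1$ and $Q_2$, the cancellation of the $x^TA^TAx$ terms in the first three summands of $G$, and the substitution $w^n_k=1$ to reconcile the $Q_2$ case with the uniform formula \eqref{eq:xiter}. Your added second-derivative and completed-square checks confirming that the critical points are genuine minima are a small strengthening the paper leaves implicit, but they do not change the argument.
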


\begin{proof}
For the derivation of the weights from 
$w^{n} = \arg\min_w G(x^{n},a,w,\epsilon_{n})$, 
we take only the terms of $G$ that depend on $w$. We derive separately 
the weights for $k \in Q_1$ and $k \in Q_2$. First, for $k \in Q_1$: 
\begin{equation*}
\begin{array}{rrl}
&& \displaystyle 
\frac{\partial}{\partial w_k} \left[ q_k w_k ((x^{n}_k)^2 + (\epsilon_{n})^2) + (2 - q_k) (w_k)^{\frac{q_k}{q_k-2}} \right] =\ 
\displaystyle 0 
\\
& \implies&\displaystyle 
 q_k \left((x^{n}_k)^2 + (\epsilon_{n})^2\right) + (2-q_k) \frac{q_k}{q_k - 2} (w_k)^{\frac{q_k}{q_k-2} -1} =\ 
\displaystyle 0 
\\
& \implies& w^{n}_k =
\displaystyle \frac{1}{\left[(x^{n}_k)^2+(\epsilon_{n})^2\right]^\frac{2-q_k}{2}} .
\end{array}
\end{equation*} 
Next, for $k \in Q_2$, we have:
\begin{eqnarray*}
&& \frac{\partial}{\partial w_k} \left[ 
2 \lambda_k \left( (x^{n}_k)^2 + (\epsilon_n)^2 \right) (w_k^2 - 2 w_k + 2) \right] = 
2 \lambda_k \left( (x^{n}_k)^2 + (\epsilon_n)^2 \right) (2 w_k - 2) = 0 \\
&& \implies w^n_k = 1
\end{eqnarray*}
Notice that this implies that \eqref{eq:witer} is valid for $k$ in both sets 
$Q_1$ and $Q_2$ since for $k \in Q_2$, $q_k = 2$ and 
\eqref{eq:witer} gives $w^n_k = 1$ as required. 

\vspace{2.mm}
Next, we verify that the definition:
\begin{equation*}
x^{n+1}_k = \left\{\arg\min_x G(x,x^n,w^n,\epsilon_n) \right\}_k
\end{equation*}
recovers the iterative scheme \eqref{eq:xiter}. Using that $w^n_k = 1$ for 
$k \in Q_2$, as just derived, we have:
\begin{eqnarray}
\label{eq:Gatxn}
\nonumber
G(x,x^n,w^n,\epsilon_n) &=& \|Ax - b\|_2^2 - \|A(x - x^n)\|_2^2  + \|x - x^n\|_2^2\\
&& \quad + \sum_{k \in Q_1} \lambda_k \left( q_k w^n_k ((x_k)^2 + (\epsilon_n)^2) + (2-q_k) (w^n_k)^\frac{q_k}{q_k - 2} \right)\\
\nonumber
&& \quad + \sum_{k \in Q_2} \left[ 2 \lambda_k \left( (x_k)^2 + (\epsilon_n)^2 \right) \right].
\end{eqnarray}
To prove \eqref{eq:xiter}, we again separately analyze the 
cases $k \in Q_1$ and $k \in Q_2$. 
We differentiate \eqref{eq:Gatxn} with respect to $x$, then take the $k$-th component 
and set to zero. For $k \in Q_1$, removing terms of \eqref{eq:Gatxn} that do not depend 
on $x$, we get:
\begin{eqnarray*}
&& \frac{\partial}{\partial x_k} \left( \|Ax - b\|_2^2 - \|A(x - x^n)\|_2^2 + 
\|x - x^n\|_2^2 +  \displaystyle\sum_{k \in Q_1} \lambda_l q_l w^n_l x^2_l \right) \\
&=& \frac{\partial}{\partial x_k} \left( \|x\|_2^2 - 2\left(x, x^n + A^T b - A^T A x^n \right)  +  \displaystyle\sum_{k \in Q_1} \lambda_l q_l w^n_l x^2_l \right) = 0
\end{eqnarray*}
and the result is:
\begin{equation*}
-2\{A^T b\}_k + 2\{A^TA x^n\}_k + 2x_k - 2x^n_k + 2 \lambda_k q_k w_k^n x_k = 0.
\end{equation*}
Then we solve for $x_k$ and define $x^{n+1}_k$ to be the result:
\begin{eqnarray*}
x_k (1 + \lambda_k q_k w^n_k) &=& x^n_k + \{A^Tb\}_k - \{A^T A x^n\}_k \\
\implies \qquad\qquad\quad 
x_k^{n+1} &=& \frac{1}{1 + \lambda_k q_k w^n_k} \left\{ x^n + A^Tb - A^T A x^n \right\}_k.
\end{eqnarray*}
For $k \in Q_2$, $w^n_k = 1$ and we obtain:
\begin{eqnarray*}
&& \frac{\partial}{\partial x_k} \left( \|x\|_2^2 - 2\left(x, x^n + A^T b - A^T A x^n \right)  +  \displaystyle\sum_{k \in Q_2} 2 \lambda_k x_k^2 \right) \\
&&= -2\{A^T b\}_k + 2\{A^TA x^n\}_k + 2x_k - 2x^n_k + 4 \lambda_k x_k = 0.
\end{eqnarray*}
which, upon solving for $x_k$, yields the scheme:
\begin{equation*}
x_k^{n+1} = \frac{1}{1 + 2 \lambda_k} \left\{ x^n + A^Tb - A^T A x^n \right\}_k.
\end{equation*}
Thus, it follows that \eqref{eq:xiter} holds for all $1 \leq k \leq N$.
\end{proof}

\begin{remark}
\label{rem:surrogate_and_functional}
Assume that as $n \rightarrow \infty$, $x^n \rightarrow x$ and $\epsilon_n \rightarrow 0$.
Notice that with the weights in \eqref{eq:witer}, we have that:  
\begin{equation*}
w^n_k (x^n_k)^2 
= \frac{(x^n_k)^2}{\left((x_k^n)^2 + 
(\epsilon_n)^2\right)^\frac{2-q_k}{2}} 
\to \frac{x_k^2}{\left(x_k^2 + 0 \right)^\frac{2-q_k}{2}}  = |x_k|^{q_k} \ \mbox{ as }\ n\to\infty, \mbox{   if   } x_k \neq 0.
\end{equation*}
Next, observe the result of the computation:
\begin{eqnarray}
\label{eq:Gwnxnsimplification}
\nonumber
&&  q_k w^n_k \left((x_k)^2 + (\epsilon_n)^2\right) + (2-q_k) (w^n_k)^\frac{q_k}{q_k - 2} \\ 
& =& q_k \left((x^{n}_k)^2 + (\epsilon_n)^2\right)^{\left(\frac{q_k - 2}{2} + \frac{2}{2}\right)} + (2-q_k) \left((x^{n}_k)^2 + (\epsilon_n)^2\right)^{\left( \frac{q_k - 2}{2} \frac{q_k}{q_k - 2} \right)} \\
\nonumber
& =& 2\left((x^n_k)^2 +(\epsilon_n)^2\right)^{\frac{q_k}{2}}.
\end{eqnarray}
It follows from \eqref{eq:Gatxn} and $q_k = 2$, $w^n_k = 1$ for $k \in Q_2$ that:
\begin{eqnarray}
\label{eq:Gatxn2}
\nonumber
G(x^n,x^n,w^n,\epsilon_n)= \|Ax^n - b\|_2^2 &+& \sum_{k \in Q_1} \lambda_k \left( q_k w^n_k ((x_k)^2 + (\epsilon_n)^2) + (2-q_k) (w^n_k)^\frac{q_k}{q_k - 2} \right)\\
\nonumber
&+& \sum_{k \in Q_2} \left[ 2 \lambda_k \left( (x_k)^2 + (\epsilon_n)^2 \right) \right],
\end{eqnarray}
which using \eqref{eq:Gwnxnsimplification}, reduces to:
\begin{eqnarray}
\|Ax^n - b\|_2^2 + 2 \displaystyle\sum_{k \in Q_1} \lambda_k \left((x^n_k)^2 + (\epsilon_n)^2\right)^\frac{q_k}{2} + 2 \displaystyle\sum_{k \in Q_2} \lambda_k \left((x^n_k)^2 + (\epsilon_n)^2\right)^\frac{2}{2}.
\end{eqnarray}
Thus, we recover:
\begin{equation}
\label{eq:gxnxnwnen}
G(x^n,x^n,w^n,\epsilon_n) = \|Ax^n - b\|_2^2 + 2 \displaystyle\sum_{k=1}^N \lambda_k \left((x^n_k)^2 + (\epsilon_n)^2\right)^\frac{q_k}{2},
\end{equation}
As $n \rightarrow \infty$, assuming $x^n \rightarrow x$ and $\epsilon_n \rightarrow 0$, 
we have that:
\begin{equation*}
\lim_{n \to \infty} G(x^n,x^n,w^n,\epsilon_n) = \|Ax - b\|_2^2 + 2 \displaystyle\sum_{k=1}^N \lambda_k |x_k|^{q_k},
\end{equation*}
so we recover the functional \eqref{eq:ellq_funct} we would like to minimize.
\end{remark}

\subsection{Summary of argument flow}
\label{subsect:argflow}
\textit{Notation}: With some abuse of notation, we will denote by $\{a_n\}$ the 
sequence $(a_n)_{n \in \mathbb{N}}$, and write $\{a_{n_l}\}$, $\{a_{n_{l_r}}\}$ 
for subsequences $(a_{n_l})_{l \in \mathbb{N}}$, 
$(a_{n_{l_r}})_{r \in \mathbb{N}}$, respectively. By $F$ we will refer to the functional 
$F_{\bf{q},\pmb{\lambda}}(x)$ in \eqref{eq:ellq_funct}. We demonstrate that 
for our set of iterates $\{x^n\}$ from \eqref{eq:irls_scheme}, we have 
convergence to the minimizing value, i.e. $\lim_{n \to \infty} F(x^n) = F(\bar{x})$, 
where $\bar{x}$ is such that $F(\bar{x}) \leq F(x)$ for all $x$. Under some 
conditions on $F$, the minimizer will be unique. In that case, we have 
that $x^n \to \bar{x}$. These statements will all follow from a few properties of 
$F$ and $G$ (from \eqref{eq:ellq_funct} and \eqref{eq:Gdef}) and the sequence of 
iterates $x^n$ from \eqref{eq:irls_scheme}, 
which we now state, and which will be proved in Section 3:

\begin{enumerate}[(1)]
\item $0 \leq F(x^n) \leq G(x^n,x^n,w^n,\epsilon_n), \ \forall \, n$.
\item $G(x^{n},x^{n},w^{n},\epsilon_{n}) \leq G(x^{n-1},x^{n-1},w^{n-1},\epsilon_{n-1}), \ \forall \, n$.
\item $\exists$ subsequence $\{x^{n_l}\}$ of $\{x^n\}$ for 
which $\lim_{l \to \infty} \left[ G(x^{n_l},x^{n_l},w^{n_l},\epsilon_{n_l}) - F(x^{n_l}) \right] = 0$.
\item $\|x^n\|$ is bounded, which implies that any subsequence 
of $\{x^n\}$ has a weakly convergent subsequence; in particular $\{x^{n_l}\}$ 
has a convergent subsequence $\{x^{n_{l_r}}\}$. 
\item The limit $\bar{x}$ of the particular convergent subsequence 
$\{x^{n_{l_r}}\}$ satisfies the optimality conditions of $F$ 
(i.e. $F(\bar{x}) \leq F(x)$ for all $x$).
\end{enumerate}

We now show that these statements suffice to conclude that 
$\lim_{n \to \infty} F(x^n) = F(\bar{x})$, 
an important result, as it states that the iterates converge to the minimizing 
value of the functional. First, let us define 
the sequence $\{g_n\} := G(x^n,x^n,w^n,\epsilon_n)$. Note from (1) and (2) 
that $\{g_n\}$ is bounded from below and monotonically decreasing; it follows 
that this sequence converges as $n \to \infty$, 
say to some $\bar{g}$. Consequently, 
$\{ G(x^{n_l}, x^{n_l}, w^{n_l}, \epsilon_{n_l}) \} = \{ g_{n_l} \}$ converges to 
$\bar{g}$ as $l \to \infty$. By (3) it then follows that $\{ F(x^{n_l}) \}$ 
also converges to $\bar{g}$ as $l \to \infty$. 
Since we know that  
$x^{n_{l_r}} \to \bar{x}$, it follows from the continuity of $F$ that 
$\{ F(x^{n_{l_r}}) \} \to \{ F(\bar{x}) \}$; consequently $\bar{g} = F(\bar{x})$ and 
hence $F(x^{n_l}) \to F(\bar{x})$ as $l \to \infty$, where 
$F(\bar{x}) \leq F(x)$ for all $x$.

Finally, we like to show that $F(x^n) \to F(\bar{x})$. Note that for any 
$\sigma > 0$, $\exists L$ such that $\forall l \geq L$ we have 
that $|F(x^{n_l}) - F(\bar{x})| < \sigma$. Next, for every $n \geq n_l \geq l$, 
we have that:
\begin{equation*}
F(x^{n_l}) = g_{n_l} \geq g_n = G(x^n, x^n, w^n, \epsilon_n) \geq F(x^n) 
\end{equation*}
where $g_{n_l} \geq g_n$ since $n_l \leq n$. 
So this means that $F(x^{n_l}) \geq F(x^n)$ and we know from before that 
$|F(x^{n_l}) - F(\bar{x})| =  F(x^{n_l}) - F(\bar{x}) < \sigma$, 
which implies that $F(x^n) - F(\bar{x}) < \sigma$ for $n \geq n_l$, where 
we have used that $F(\bar{x}) \leq F(x)$ for all $x$.
It follows that $F(x^n) \to F(\bar{x})$. This implies, in particular, that 
for any accumulation point $\hat{x}$ of $\{x^n\}$, we have $F(\hat{x}) = F(\bar{x})$ 
(since $\hat{x}$ is the limit of a subsequence of $\{x^n\}$ and $F$ is continuous).
In the case that the minimizer of $F$ is unique and equal to $\bar{x}$, it follows 
that $\bar{x}$ is the only possible accumulation point of $\{x^n\}$, i.e. 
that $x^n \to \bar{x}$.
The majority of the work in the convergence argument which follows goes 
into introducing a proper construction for the $\{\epsilon_n\}$ sequence and 
showing that the properties (1) - (5) hold for this choice. 

\section{Analysis of the IRLS algorithm}

Having set out the fundamentals (derivation of the scheme and outline of the 
convergence proof), we now analyze the IRLS scheme 
in \eqref{eq:irls_scheme}, with weights $w^n_k$ defined by 
\eqref{eq:irls_scheme_weights} and $\{\epsilon_n\}$ as defined by 
\eqref{eq:irls_scheme_epsilons}; we establish convergence by proving properties 
(1) to (5) from Section \ref{subsect:argflow}. We will 
assume that $\|A\|_2 < 1$. (I.e., $A$ has spectral or operator norm, or equivalently 
largest singular value, less than 1, 
which can be accomplished by simple rescaling. 
The largest singular value can typically be estimated accurately using a few 
iterations of the power scheme.)  

\begin{lemma}
Let the surrogate functional $G$ be given by  \eqref{eq:Gdef}
of {\rm Lemma \ref{lem:irls_surrogate_functional}} and $F$ be the functional 
in \eqref{eq:ellq_funct}. Then property {\rm (1)} above holds.
\end{lemma}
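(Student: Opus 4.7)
The plan is to prove the two inequalities separately and exploit the closed-form expression for $G(x^n,x^n,w^n,\epsilon_n)$ already obtained in Remark \ref{rem:surrogate_and_functional}, so that the comparison with $F$ becomes term-by-term.

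For the lower bound, I would simply observe that
$F(x^n) = \|Ax^n - b\|_2^2 + 2\sum_{k=1}^N \lambda_k |x^n_k|^{q_k}$
is the sum of a squared norm and nonnegative terms (since each $\lambda_k \geq 0$ and $|x^n_k|^{q_k} \geq 0$). Hence $F(x^n) \geq 0$ with no further work. The lower bound is therefore trivial and deserves only a single line.

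For the upper bound, the key is to invoke the simplification \eqref{eq:gxnxnwnen} from Remark \ref{rem:surrogate_and_functional}, which gives
\begin{equation*}
G(x^n,x^n,w^n,\epsilon_n) = \|Ax^n - b\|_2^2 + 2\sum_{k=1}^N \lambda_k \left((x^n_k)^2 + \epsilon_n^2\right)^{q_k/2}.
\end{equation*}
Since the data-fidelity term $\|Ax^n - b\|_2^2$ appears identically in both $F(x^n)$ and $G(x^n,x^n,w^n,\epsilon_n)$, it suffices to compare the penalty sums. For each $k$, I would use the elementary monotonicity of $t \mapsto t^{q_k/2}$ on $[0,\infty)$ together with $\epsilon_n^2 \geq 0$ to get
\begin{equation*}
|x^n_k|^{q_k} = \left((x^n_k)^2\right)^{q_k/2} \leq \left((x^n_k)^2 + \epsilon_n^2\right)^{q_k/2},
\end{equation*}
valid for every $q_k \in [1,2]$. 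Multiplying by $2\lambda_k \geq 0$ and summing over $k$ delivers the desired inequality $F(x^n) \leq G(x^n,x^n,w^n,\epsilon_n)$.

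There is no real obstacle here: the work has already been done in Remark \ref{rem:surrogate_and_functional}, where the weight-dependent and exponent-dependent pieces of $G$ collapse to the clean expression above. The one thing worth flagging is that the formula \eqref{eq:gxnxnwnen} was derived under the assumption that $w^n_k$ is the minimizer obtained in Lemma \ref{lem:irls_surrogate_functional}; since in our iterative scheme we indeed define $w^n_k$ by \eqref{eq:witer}, the substitution is legitimate at every step, and the proof goes through uniformly in $n$.
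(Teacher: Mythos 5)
Your proof is correct and follows essentially the same route as the paper: both invoke the simplified expression \eqref{eq:gxnxnwnen} from Remark \ref{rem:surrogate_and_functional} and then compare the penalty sums term by term via $|x^n_k|^{q_k} = \left((x^n_k)^2\right)^{q_k/2} \leq \left((x^n_k)^2 + \epsilon_n^2\right)^{q_k/2}$, with nonnegativity of $F$ being immediate. You merely make the monotonicity step more explicit than the paper's one-line verification.
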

\begin{proof}
The proof follows by direct verification using the 
result of Remark \ref{rem:surrogate_and_functional}.
\[
G(x^n,x^n,w^n,\epsilon_n) 
=
\|A x^n - b\|_2^2 + 
2\displaystyle\sum_{k=1}^N \lambda_k \left( (x^n_k)^2 + (\epsilon_n)^2 \right)^{\frac{q_k}{2}} 
\geq
F(x^n) =  \|A x^n - b\|_2^2 + 
2 \displaystyle\sum_{k=1}^N \lambda_k |x^n_k|^{q_k} \geq 0 ~.~~~~~~~~
\]
\vspace*{-.3 in}

\end{proof}

\begin{lemma}
\label{lem:xndiffbounded_and_xnbounded}
Assume that the spectral norm of $A$ is bounded by 1, i.e. $\|A\|_2 < 1$. Then the sequence of iterates 
$\{x^n\}$ generated by \eqref{eq:irls_scheme} satisfies 
$\|x^n - x^{n-1}\|_2 \rightarrow 0$ and the $x_n$ are bounded in $\ell_1$-norm 
($\|x^n\|_1 \leq K$ for some $K \in \mathbb{R}$). 
\end{lemma}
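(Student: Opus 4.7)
The strategy is to prove that the surrogate values $g_n := G(x^n, x^n, w^n, \epsilon_n)$ form a monotone non-increasing sequence whose decrement controls $\|x^{n+1} - x^n\|_2^2$. Explicitly, I aim for an inequality of the form
\[
g_n - g_{n+1} \;\geq\; c\,\|x^{n+1} - x^n\|_2^2
\]
with a constant $c > 0$ depending only on $\|A\|_2 < 1$. Since the preceding lemma gives $g_n \geq F(x^n) \geq 0$, summing over $n$ then yields $\sum_n \|x^{n+1} - x^n\|_2^2 \leq g_0/c < \infty$, and in particular $\|x^{n+1} - x^n\|_2 \to 0$.

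To produce this estimate I insert the intermediate quantity $G(x^{n+1}, x^n, w^n, \epsilon_n)$ and bound each piece. For the upper piece, viewed as a function of $x$ alone, $G(\cdot, x^n, w^n, \epsilon_n)$ has Hessian $2I + 2\sum_k \lambda_k q_k w^n_k\, e_k e_k^T \succeq 2I$; combined with the fact that $x^{n+1}$ is its minimizer, strong convexity delivers
\[
g_n - G(x^{n+1}, x^n, w^n, \epsilon_n) \;\geq\; \|x^{n+1} - x^n\|_2^2.
\]
For the lower piece, the pair of terms $\|x - x^n\|_2^2 - \|A(x - x^n)\|_2^2$ appearing in \eqref{eq:Gdef} contributes at least $(1 - \|A\|_2^2)\|x^{n+1} - x^n\|_2^2$ at $x = x^{n+1}$; this is the only place where $\|A\|_2 < 1$ is used.

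The technical core is a pointwise inequality for each $k \in Q_1$ and each $y \in \mathbb{R}$,
\[
q_k w^n_k (y^2 + \epsilon_n^2) + (2-q_k)(w^n_k)^{q_k/(q_k-2)} \;\geq\; 2(y^2 + \epsilon_n^2)^{q_k/2},
\]
with $w^n_k = ((x^n_k)^2 + \epsilon_n^2)^{(q_k-2)/2}$ as in \eqref{eq:witer}. The left side is affine in $y^2$, the right side is concave in $y^2$ since $q_k/2 \leq 1$, and direct computation shows that the two sides (together with their first derivatives in $y^2$) agree at $y^2 = (x^n_k)^2$; hence the left side is precisely the tangent line to the right side at that point, which by concavity lies above it. For $k \in Q_2$ both expressions coincide trivially. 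Applying this inequality coordinatewise at $x = x^{n+1}$ and invoking $\epsilon_{n+1} \leq \epsilon_n$ (guaranteed by \eqref{eq:irls_scheme_epsilons}) together with the closed form of $g_{n+1}$ from Remark \ref{rem:surrogate_and_functional} yields
\[
G(x^{n+1}, x^n, w^n, \epsilon_n) \;\geq\; g_{n+1} + (1 - \|A\|_2^2)\,\|x^{n+1} - x^n\|_2^2.
\]
Adding the two estimates produces the desired bound with $c = 2 - \|A\|_2^2 > 1$.

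For the $\ell_1$-bound, the monotonicity just established combined with property (1) gives $2 \sum_k \lambda_k |x^n_k|^{q_k} \leq F(x^n) \leq g_n \leq g_0$ for every $n$. Under the standing assumption $\lambda_k > 0$ and using $1 \leq q_k \leq 2$, this bounds each coordinate $|x^n_k|$ by a constant independent of $n$, and since $N$ is finite, a crude summation gives the required uniform $\ell_1$-bound. The main obstacle is the tangent-line estimate, which is what forces the $w^n_k$-weighted surrogate penalty to dominate the $F$-like sum $\sum_k \lambda_k ((x^n_k)^2 + \epsilon_n^2)^{q_k/2}$; once that inequality is in place the rest is bookkeeping on the explicit formulas for $G$ and the update rule.
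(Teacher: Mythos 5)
Your proof is correct and follows essentially the same route as the paper: a monotone decrease of $g_n = G(x^n,x^n,w^n,\epsilon_n)$ obtained by alternating the minimizations in $x$ and $w$, with the gap $\|x - x^n\|_2^2 - \|A(x - x^n)\|_2^2 \geq (1-\|A\|_2^2)\|x - x^n\|_2^2$ supplying the summable decrement, followed by telescoping and the coordinatewise bound $2\lambda_k |x^n_k|^{q_k} \leq g_1$. Your tangent-line inequality is just an explicit verification of the paper's step ``$w^{n+1}$ minimizes $G$ over $w$'' combined with the identity \eqref{eq:gxnxnwnen}, and your strong-convexity refinement of the $x$-minimization step merely improves the constant from $1-\|A\|_2^2$ to $2-\|A\|_2^2$, which is not needed.
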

\begin{proof}
Using the results from Lemma \ref{lem:irls_surrogate_functional},
we write down a sequence of inequalities:
\begin{eqnarray*}
G(x^{n+1},x^{n+1},w^{n+1},\epsilon_{n+1}) &\leq& G(x^{n+1},x^{n+1},w^{n},\epsilon_{n+1}) \quad [A] \\
&\leq& G(x^{n+1},x^n,w^n,\epsilon_{n+1}) \quad [B] \\
&\leq& G(x^{n+1},x^n,w^n,\epsilon_{n}) \quad [C] \\
&\leq& G(x^n,x^n,w^n,\epsilon_n). \quad [D]
\end{eqnarray*}
We now offer explanations for $[A-D]$. 
First, $[A]$ follows from $w^{n+1} = \displaystyle\arg\min_w G(x^{n+1},a,w,\epsilon_{n+1})$. 
Next for $[B]$, we have:
\begin{equation}
\label{eq:Gxnxn1diff}
G(x^{n+1},x^{n},w^{n},\epsilon_{n+1}) - G(x^{n+1},x^{n+1},w^{n},\epsilon_{n+1}) = 
\|x^n - x^{n+1}\|_2^2  - \|A(x^n - x^{n+1})\|_2^2,
\end{equation}
Now $\|A(x - x^n)\|_2 \leq \|A\|_2 \|x - x^n\|_2 < \|x - x^n\|_2$ 
for $\|A\|_2 < 1$, so that  $\|x - x^n\|_2^2 - \|A (x - x^n)\|_2^2 > 0$. 
Next, $[C]$ follows from $\epsilon_{n+1}\leq\epsilon_{n}$ 
(directly from \eqref{eq:irls_scheme_epsilons}). 
Finally, $[D]$ follows from $x^{n+1} = \displaystyle\arg\min_x G(x,x^n,w^n,\epsilon_n)$. 

\noindent
We now set up a telescoping sum of non-negative terms, using the inequalities 
$[A-D]$ above:
\begin{eqnarray*}
&& \displaystyle\sum_{n=1}^{P} \left( G(x^{n+1},x^{n},w^{n},\epsilon_{n+1}) - G(x^{n+1},x^{n+1},w^{n},\epsilon_{n+1}) \right) 
\\
&& \qquad \leq \displaystyle\sum_{n=1}^{P} \left( G(x^n,x^n,w^n,\epsilon_n) - G(x^{n+1},x^{n+1},w^{n+1},\epsilon_{n+1}) \right) 
\\ 
&& \qquad = G(x^1,x^1,w^1,\epsilon_1) - G(x^{P+1},x^{P+1},w^{P+1},\epsilon_{P+1}) \leq 
G(x^1,x^1,w^1,\epsilon_1) =: C \in \mathbb{R}~,
\end{eqnarray*} 
where we have used that $G(x^n,x^n,w^n,\epsilon_n)$ is always $\geq 0$.
Using \eqref{eq:Gxnxn1diff}, it follows that:
\begin{equation*}
\sum_{n=1}^{P} \left( \|x^n - x^{n+1}\|_2^2  - \|A(x^n - x^{n+1})\|_2^2 \right) \leq C.
\end{equation*}
Since $\|A(x^n - x^{n+1})\|_2^2 \leq \|A\|_2^2 \|x^n - x^{n+1}\|_2^2$ and $\|A\|_2 < 1$:
\begin{eqnarray*}
\|x^n - x^{n+1}\|_2^2  - \|A(x^n - x^{n+1})\|_2^2 
&\geq& \|x^n - x^{n+1}\|_2^2 - \|A\|_2^2 \|x^n - x^{n+1}\|^2_2 \\
&=& \gamma \|x^n - x^{n+1}\|^2_2,
\end{eqnarray*}
where $\gamma := (1 - \|A\|_2^2) > 0$. Consequently, we have:
\begin{equation*}
\begin{array}{cl}
\gamma \displaystyle\sum_{n=1}^{P} \|x^n - x^{n+1}\|_2^2 &\leq\ \displaystyle\sum_{n=1}^{P} \left( \|x^n - x^{n+1}\|_2^2  - \|A(x^n - x^{n+1})\|_2^2 \right) \leq C
\\
  \implies &
\displaystyle\sum_{n=1}^{\infty} \|x^n - x^{n+1}\|_2^2 
< \infty 
\\
  \implies& \|x^n - x^{n+1}\|_2 \rightarrow\ 0.
\end{array}
\end{equation*}

\vspace{3.mm}
To prove that the $\{x^n\}$ are bounded, we use the result from Remark 
\ref{rem:surrogate_and_functional}:
\begin{equation*}
G(x^n,x^n,w^n,\epsilon_n) = \|A x^n - b\|_2^2 + 2 \displaystyle\sum_{k=1}^N \lambda_k \left( (x^n_k)^2 + (\epsilon_n)^2 \right)^\frac{q_k}{2} \geq \lambda_k |x^n_k|^{q_k},
\end{equation*} 
It follows that:
\begin{equation*}
|x^n_k| \leq \left( \frac{1}{\lambda_k} G(x^n,x^n,w^n,\epsilon_n) \right)^\frac{1}{q_k} \leq \left( \frac{1}{\lambda_k} G(x^1,x^1,w^1,\epsilon_1) \right)^\frac{1}{q_k} 
\leq \max_{k \in \{1,\ldots,N\}}\left( \frac{1}{\lambda_k} G(x^1,x^1,w^1,\epsilon_1) \right)^\frac{1}{q_k} =: C_1
\end{equation*}
This implies the boundedness of $\{x^n\}$, since 
$~\|x^n\|_1 = \displaystyle\sum_{k=1}^N |x^n_k| \leq N C_1~$.
\end{proof}
By Lemma \ref{lem:xndiffbounded_and_xnbounded} we have that property (2) holds; moreover (4) (the
boundedness of the $\|x^n\|_1$) is established as well. The next lemma demonstrates property (3) 
and the existence of a convergent subsequence $x^{n_{l_r}}$. 

\vspace{3.mm}
\begin{lemma}
\label{lem:xnl_sequence}
There exists a subsequence $\{\epsilon_{n_l}\}$ of $\{\epsilon_n\}$ such that every 
member of the subsequence is defined by:
\begin{equation*}
\epsilon_{n_l} = \left( \|x^{n_l} - x^{n_l - 1}\|_2 + \alpha^{n_l} \right)^{\frac{1}{2}} < \epsilon_{n_l-1}.
\end{equation*}  
Additionally, there is a subsequence $\{n_{l_r}\}$ of this subsequence such 
that $\{x^{n_{l_r}}\}_r$ is convergent.
\end{lemma}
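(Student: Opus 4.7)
The plan is to prove the two assertions separately, with the bulk of the work in Part~1 (existence of the subsequence where the min in \eqref{eq:irls_scheme_epsilons} is attained by the second argument), and Part~2 following quickly from the boundedness established in Lemma \ref{lem:xndiffbounded_and_xnbounded}.

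For Part 1, I would first observe that from \eqref{eq:irls_scheme_epsilons}, the sequence $\{\epsilon_n\}$ is non-increasing and bounded below by $0$, so it converges to some $\epsilon^{\star}\geq 0$. I then argue by contradiction: suppose only finitely many indices $n$ satisfy $\epsilon_n=(\|x^n-x^{n-1}\|_2+\alpha^n)^{1/2}<\epsilon_{n-1}$. Then there exists $N_0$ such that for all $n\geq N_0$, the minimum in \eqref{eq:irls_scheme_epsilons} is attained by $\epsilon_{n-1}$, i.e. $\epsilon_n=\epsilon_{n-1}$. In particular $\epsilon^{\star}=\epsilon_{N_0-1}$, and by the definition of the min we must have
\[
\epsilon_{n-1}\ \leq\ \bigl(\|x^n-x^{n-1}\|_2+\alpha^n\bigr)^{1/2}\qquad\text{for all } n\geq N_0.
\]
But by Lemma~\ref{lem:xndiffbounded_and_xnbounded} $\|x^n-x^{n-1}\|_2\to 0$, and since $\alpha\in(0,1)$ we have $\alpha^n\to 0$; the right-hand side therefore tends to $0$. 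Letting $n\to\infty$ forces $\epsilon^{\star}\leq 0$, hence $\epsilon^{\star}=0$, hence $\epsilon_{N_0-1}=0$, contradicting the fact (noted after \eqref{eq:irls_scheme_epsilons}) that $\epsilon_n>0$ for all $n$. Therefore infinitely many indices satisfy the strict inequality; collecting them in increasing order produces the desired subsequence $\{n_l\}$.

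For Part 2, by Lemma~\ref{lem:xndiffbounded_and_xnbounded} the full sequence $\{x^n\}$ is bounded in $\ell_1$-norm, so in particular the subsequence $\{x^{n_l}\}$ is a bounded sequence in the finite-dimensional space $\mathbb{R}^N$. The Bolzano--Weierstrass theorem then supplies a further subsequence $\{x^{n_{l_r}}\}_r$ which converges (in the Euclidean norm, hence in every norm on $\mathbb{R}^N$) to some limit; call it $\bar{x}$.

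The main obstacle is the first part, specifically the care needed in setting up the contradiction: one must argue that if the ``cap'' $\epsilon_{n-1}$ wins the minimum from some point on, then $\{\epsilon_n\}$ stabilises at a strictly positive value, while the competing expression $(\|x^n-x^{n-1}\|_2+\alpha^n)^{1/2}$ is being driven to $0$ by Lemma~\ref{lem:xndiffbounded_and_xnbounded} together with $\alpha^n\to 0$. This is the only place where the specific form \eqref{eq:irls_scheme_epsilons} of the $\epsilon$-update (with the additive $\alpha^n$ term ensuring the expression is strictly positive and controllable) plays an essential role; everything else is routine.
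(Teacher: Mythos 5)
Your proposal is correct and follows essentially the same route as the paper: a contradiction argument showing that if the strict inequality held only finitely often, the monotone sequence $\{\epsilon_n\}$ would eventually be constant at a positive value while the second argument of the min in \eqref{eq:irls_scheme_epsilons} (an upper bound for $\epsilon_n$, and hence for the limit) is driven to zero by Lemma \ref{lem:xndiffbounded_and_xnbounded} and $\alpha^n\to 0$; the second assertion then follows from boundedness and Bolzano--Weierstrass exactly as in the paper. No gaps.
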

\begin{proof}
By the definition of the $\epsilon_n$'s in \eqref{eq:irls_scheme_epsilons} and 
by Lemma \ref{lem:xndiffbounded_and_xnbounded},
we know that $\epsilon_n \rightarrow 0$, since $\|x^n-x^{n-1}\|\to0$ and $\alpha^n\to0$. 
It follows that a subsequence $\{n_l\}$ must exist such that 
$\epsilon_{n_l}<\epsilon_{n_l-1}$, for otherwise, the monotonicity 
$\epsilon_{n+1} \leq \epsilon_n$ combined with $\epsilon_n > 0$ for all $n$ would 
imply the existence of $N_0$ such that for $n\geq N_0$, $\epsilon_{n+1} = \epsilon_n$, 
implying that the sequence of $\epsilon_n$'s would not converge to zero. The fact 
that $n_{l_r}$ exists is a consequence of the 
boundedness of the iterates $\{x^n\}$ and hence that of $\{x^{n_l}\}$, 
Lemma \ref{lem:xndiffbounded_and_xnbounded}, and the standard fact that any bounded 
sequence in $\mathbb{R}^N$ has at least one accumulation point.  
\end{proof}
By Lemma \ref{lem:xnl_sequence} and Lemma \ref{lem:xndiffbounded_and_xnbounded}, 
we have that $\epsilon_{n_l} \to 0$ as $l \to \infty$. Thus, together with  
\eqref{eq:gxnxnwnen}, it follows that (3) holds.

\vspace{3.mm}
\begin{lemma}
\label{lem:xnlrconvtominimizer}
The limit $\bar{x}$ of the converging subsequence $\{x^{n_{l_r}}\}$ 
satisfies the optimality conditions \eqref{eq:optcond} of the convex functional 
\eqref{eq:ellq_funct}:
\begin{equation}
\begin{array}{rll}
\{A^T (b - Ax)\}_k &= \lambda_k \sgn(x_k) q_k |x_k|^{q_k-1} \,,& x_k \neq 0 \quad 
(1 \leq q_k \leq 2) \\                         
\left|\{A^T (b - Ax)\}_k\right| &\leq \lambda_k \,,& x_k = 0 \quad (q_k = 1) \\
\{A^T (b - Ax)\}_k &= 0 \,,& x_k = 0 \quad (q_k > 1)                           
\end{array}
\end{equation}
\end{lemma}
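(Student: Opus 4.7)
My plan is to pass to the limit in the IRLS iteration \eqref{eq:irls_scheme} along the convergent subsequence $\{x^{n_{l_r}}\}$ furnished by Lemma~\ref{lem:xnl_sequence}. Rewriting \eqref{eq:irls_scheme} at step $n-1$ as
\[
\lambda_k q_k w^{n-1}_k x^n_k = (x^{n-1}_k - x^n_k) + \{A^T(b - Ax^{n-1})\}_k,
\]
I set $n = n_{l_r}$ and let $r \to \infty$. Since Lemma~\ref{lem:xndiffbounded_and_xnbounded} gives $\|x^n - x^{n-1}\|_2 \to 0$, both $x^{n_{l_r}}$ and $x^{n_{l_r}-1}$ converge to $\bar{x}$, so the right hand side tends to $\{A^T(b - A\bar{x})\}_k$. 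The three conditions of \eqref{eq:optcond} will then follow by identifying the limit (or $\limsup$) of the left hand side in each of the cases $\bar{x}_k \neq 0$, $(\bar{x}_k = 0,\ q_k > 1)$, and $(\bar{x}_k = 0,\ q_k = 1)$.

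When $\bar{x}_k \neq 0$, the weight $w^{n_{l_r}-1}_k = [(x^{n_{l_r}-1}_k)^2 + \epsilon_{n_{l_r}-1}^2]^{-(2-q_k)/2}$ is continuous at $(\bar{x}_k, 0)$ and converges to $|\bar{x}_k|^{q_k-2}$, giving $\lambda_k q_k w^{n_{l_r}-1}_k x^{n_{l_r}}_k \to \lambda_k q_k \sgn(\bar{x}_k)|\bar{x}_k|^{q_k-1}$, which is the first line of \eqref{eq:optcond}. For the two $\bar{x}_k = 0$ cases the weights can diverge, and the argument must exploit the specific rule \eqref{eq:irls_scheme_epsilons}: by Lemma~\ref{lem:xnl_sequence}, along the subsequence $\{n_l\}$ we have $\|x^{n_l} - x^{n_l-1}\|_2 \leq \epsilon_{n_l}^2$, while the monotonicity of $\{\epsilon_n\}$ yields $w^{n_l-1}_k \leq \epsilon_{n_l-1}^{-(2-q_k)} \leq \epsilon_{n_l}^{-(2-q_k)}$.

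I then decompose
\[
w^{n_l-1}_k x^{n_l}_k = w^{n_l-1}_k x^{n_l-1}_k + w^{n_l-1}_k (x^{n_l}_k - x^{n_l-1}_k),
\]
bound $|w^{n_l-1}_k x^{n_l-1}_k| \leq [(x^{n_l-1}_k)^2 + \epsilon_{n_l-1}^2]^{(q_k-1)/2}$, and combine the two estimates above to get $|w^{n_l-1}_k (x^{n_l}_k - x^{n_l-1}_k)| \leq \epsilon_{n_l}^{-(2-q_k)} \cdot \epsilon_{n_l}^2 = \epsilon_{n_l}^{q_k}$. Along $n_{l_r}$ the second term vanishes. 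The first term also vanishes for $q_k > 1$, yielding $\{A^T(b - A\bar{x})\}_k = 0$; for $q_k = 1$ it is bounded by $1$, so $\limsup_r |\lambda_k w^{n_{l_r}-1}_k x^{n_{l_r}}_k| \leq \lambda_k$, giving $|\{A^T(b - A\bar{x})\}_k| \leq \lambda_k$. The main obstacle is the case $\bar{x}_k = 0,\ 1 < q_k < 2$: a naive limit in the iteration at a generic index $n$ only produces the indeterminate identity $\{A^T(b - A\bar{x})\}_k = \{A^T(b - A\bar{x})\}_k$, so the conclusion $\{A^T(b - A\bar{x})\}_k = 0$ truly requires the quantitative coupling between $\epsilon_n$ and $\|x^n - x^{n-1}\|_2$ enforced by \eqref{eq:irls_scheme_epsilons}. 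This is the one place in the convergence proof where the precise form of the $\{\epsilon_n\}$ rule is indispensable.
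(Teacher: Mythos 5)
Your proof is correct, and for the hard cases ($\bar{x}_k = 0$) it takes a genuinely different route from the paper. Both arguments start identically: pass to the limit in $\lambda_k q_k w^{n_{l_r}-1}_k x^{n_{l_r}}_k = (x^{n_{l_r}-1}_k - x^{n_{l_r}}_k) + \{A^T(b-Ax^{n_{l_r}-1})\}_k$, handle $\bar{x}_k \neq 0$ by continuity, and then must control $w^{n_{l_r}-1}_k x^{n_{l_r}}_k$ when $\bar{x}_k = 0$. The paper does this by contradiction: it sets $\beta_k = \frac{1}{\lambda_k q_k}\{A^T(b-A\bar{x})\}_k$, assumes $|\beta_k|>1$ (resp.\ $|\beta_k|>0$), and derives the inequality $(u+v)^2 > (1-\sigma)\beta_k^2(u^2+|v|)^{2-q_k}$ with $u = x^{n_{l_r}-1}_k$, $v = x^{n_{l_r}}_k - x^{n_{l_r}-1}_k$, which it then refutes via the Young-type bound $(u+v)^2 \leq (1+K)u^2 + (1+\tfrac{1}{K})v^2$ and a delicate choice of $\sigma$ and $K$. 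You instead prove the needed bound directly: splitting $w^{n_l-1}_k x^{n_l}_k$ into $w^{n_l-1}_k x^{n_l-1}_k$ (bounded by $[(x^{n_l-1}_k)^2+\epsilon_{n_l-1}^2]^{(q_k-1)/2}$, which is $\leq 1$ for $q_k=1$ and tends to $0$ for $q_k>1$) plus $w^{n_l-1}_k(x^{n_l}_k - x^{n_l-1}_k)$ (bounded by $\epsilon_{n_l}^{-(2-q_k)}\cdot\epsilon_{n_l}^2 = \epsilon_{n_l}^{q_k} \to 0$, using $\|x^{n_l}-x^{n_l-1}\|_2 \leq \epsilon_{n_l}^2$ from Lemma \ref{lem:xnl_sequence} and $\epsilon_{n_l}\leq\epsilon_{n_l-1}$). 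All the inequalities check out (including the direction $\epsilon_{n_l-1}^{-(2-q_k)} \leq \epsilon_{n_l}^{-(2-q_k)}$, since $2-q_k\geq 0$), and both arguments ultimately rest on the same two ingredients --- the coupling of $\epsilon_{n_l}$ to $\|x^{n_l}-x^{n_l-1}\|_2$ in \eqref{eq:irls_scheme_epsilons} and the explicit form of the weights --- but your direct estimate dispenses with the $\sigma,K$ bookkeeping and is shorter and, to my eye, more transparent. One small remark: your closing comment singles out $1<q_k<2$ as the delicate case, but in your own decomposition that case is already handled by the first term tending to zero; the quantitative $\epsilon$-rule is what controls the second term, and it is equally needed for $q_k=1$.
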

\begin{proof}
For each $k$, we consider three separate cases, 
depending on the limit $\bar{x}_k$.
\begin{itemize}
\item[(1)]
	$\overline{x}_k \neq 0$ and $1 \leq q_k \leq 2$,
\item[(2)] 
	$\overline{x}_k = 0$ and $q_k = 1$, 
\item[(3)]  
	$\overline{x}_k = 0$ and $q_k > 1$. 
\end{itemize}
\vspace{3.mm}

Since $x^{n_{l_r}} \rightarrow \bar{x}$, and since $\|x^n - x^{n+1}\| \rightarrow 0$
(by Lemma \ref{lem:xndiffbounded_and_xnbounded}),
we have that:
$x^{n_{l_r} + 1}_k \rightarrow \bar{x}_k$. We can rewrite the iterative 
scheme \eqref{eq:irls_scheme} as:
\begin{equation*}
x^{n+1}_k \left(1 + \lambda_k q_k w^n_k\right) = x^n_k + \{A^T (b - A x^n)\}_k 
\end{equation*}
Specializing this to $\{x^{n_{l_r}}\}$ and reordering terms, we have:
\begin{equation*}
\lambda_k q_k w^{n_{l_r}}_k x^{n_{l_r}+1}_k = x^{n_{l_r}}_k - x^{n_{l_r}+1}_k  + \{A^T ( b - A x^{n_{l_r}})\}_k. 
\end{equation*}
Since the right hand side converges to a limit as $r \to \infty$, so must the left 
hand side; we obtain:
\begin{equation}
\label{eq:limwx}
\lim_{r \to \infty} w^{n_{l_r}}_k x^{n_{l_r}+1}_k = \frac{1}{\lambda_k q_k} \{A^T (b - A \overline{x})\}_k.
\end{equation}
We will use this to compute $\{A^T (b - A\bar{x})\}_k$ and to verify 
that \eqref{eq:optcond} is satisfied.
We are thus interested in the value of 
$\lim_{r \to \infty} w^{n_{l_r}}_k x^{n_{l_r}+1}_k$.

In case (1), $\lim_{r \to \infty} x^{n_{l_r}}_k = \bar{x_k} \neq 0$, we obtain
\begin{eqnarray*}
\lim_{r \to \infty} w^{n_{l_r}}_k x^{n_{l_r}+1}_k = \lim_{l \to \infty} w^{n_{l_r}}_k x^{n_{l_r}}_k  \frac{x^{n_{l_r}+1}_k}{x^{n_{l_r}}_k} = \lim_{l \to \infty} x^{n_{l_r}}_k w^{n_{l_r}}_k,
\end{eqnarray*}
where we have used that $\lim_{r \to \infty} \frac{x^{n_{l_r}+1}_k}{x^{n_{l_r}}_k} = 1$, 
since $\|x^{n+1} - x^n\| \rightarrow 0$. Using \eqref{eq:irls_scheme_weights}, it 
follows that:
\begin{eqnarray*}
\lim_{r \to \infty} w^{n_{l_r}}_k x^{n_{l_r}+1}_k 
&=& \lim_{r \to \infty} \frac{x^{n_{l_r}}_k}{\left[(x^{n_{l_r}}_k)^2 + (\epsilon_{n_{l_r}})^2\right]^{\frac{2-q_k}{2}}} 
= \frac{\overline{x_k}}{\left((\overline{x_k})^2 + 0\right)^{\frac{2-q_k}{2}}} \\ 
&=& \frac{\sgn(\overline{x_k})\left|\overline{x_k}\right| }{\left|\overline{x_k}\right|^{2-q_k}} 
= \sgn(\overline{x_k}) |\overline{x_k}|^{q_k-1}.
\end{eqnarray*}
Thus, from \eqref{eq:limwx}, we obtain that: 
$\{A^T (b - A \overline{x})\}_k = \lambda_k q_k \sgn(\overline{x_k}) |x_k|^{q_k -1}$, in 
accordance with \eqref{eq:optcond}.

\vspace{3.mm}
In case (2) and (3), $\lim_{r \to \infty} x^{n_{l_r}} = \overline{x_k} = 0$, and we 
still have that \eqref{eq:limwx} holds. Writing out \eqref{eq:irls_scheme} for 
$x^{n_{k_{l_r}}}$ in terms of $x^{n_{k_{l_r}}-1}$, we obtain:
\begin{equation*}
\lambda_k q_k w^{n_{l_r}-1}_k x^{n_{l_r}}_k = x^{n_{l_r}-1}_k - x^{n_{l_r}}_k  + \{A^T ( b - A x^{n_{l_r}-1})\}_k. 
\end{equation*}
which gives the limit:
\begin{equation}
\label{eq:limwx2}
\lim_{r \to \infty} w^{n_{l_r}-1}_k x^{n_{l_r}}_k = \frac{1}{\lambda_k q_k} \{A^T (b - A \overline{x})\}_k.
\end{equation}
We define $\beta_k$ to be: 
\begin{equation*}
\beta_k := \frac{1}{\lambda_k q_k} \{A^T (b - A \overline{x})\}_k 
\end{equation*}
To prove that \eqref{eq:optcond} is satisfied, we must show that 
$|\beta_k| \leq 1$ for case (2) and that $\beta_k = 0$ for case (3). 

We first write down some relations involving $\beta_k$ which we will use. Note that 
by \eqref{eq:limwx2},  
$\lim_{r \to \infty} w^{n_{l_r}-1}_k x^{n_{l_r}}_k = \beta_k$. 
If $\beta_k \neq 0$, it follows that  for every $\sigma \in (0,1), \exists r_0$ such that for every 
$r \geq r_0$: 
\begin{equation*}
\left( w^{n_{l_r}-1}_k x^{n_{l_r}}_k \right)^2 > (1 - \sigma) \beta_k^2 \implies \left( x^{n_{l_r}}_k \right)^2 > (1 - \sigma) \beta_k^2 \left(w^{n_{l_r}-1}_k\right)^{-2} =  (1 - \sigma) \beta_k^2 \left( (x^{n_{l_r}-1}_k)^2 + (\epsilon_{n_{l_r}-1})^2 \right)^{2-q_k} 
\end{equation*}
Since $\epsilon_{n_{l_r}} < \epsilon_{n_{l_r}-1}$, it follows that for $r$ sufficiently 
large::
\begin{eqnarray*}
\left( x^{n_{l_r}}_k \right)^2 &>& (1-\sigma) \beta_k^2 \left( (x^{n_{l_r}-1}_k)^2 + (\epsilon_{n_{l_r}})^2 \right)^{2-q_k} \\
&=& (1-\sigma) \beta_k^2 \left( (x^{n_{l_r}-1}_k)^2 + \|x^{n_{l_r}} - x^{n_{l_r}-1}\|_2 + \alpha^{n_{l_r}} \right)^{2-q_k} \\
&\geq& (1-\sigma) \beta_k^2 \left( (x^{n_{l_r}-1}_k)^2 + |x^{n_{l_r}}_k - x^{n_{l_r}-1}_k| + \alpha^{n_{l_r}} \right)^{2-q_k} \\
&>&  (1-\sigma) \beta_k^2 \left( (x^{n_{l_r}-1}_k)^2 + |x^{n_{l_r}}_k - x^{n_{l_r}-1}_k| \right)^{2-q_k}  
\end{eqnarray*}
where we have used in the last part that $\alpha^{n_{l_r}} \to 0$. 
To simplify notation, let us set $u = x^{n_{l_r}-1}_k$ and 
$v = x^{n_{l_r}}_k - x^{n_{l_r}-1}_k$. Then in terms of $u$ and $v$, we have:
\begin{equation}
\label{eq:u_plus_v_relation}
(u + v)^2 > (1 - \sigma) \beta_k^2 \left( u^2 + |v| \right)^{2-q_k} 
\end{equation}
Notice that for any $K > 0$: 
\begin{equation*}
0 \leq \left( \sqrt{K} u - \frac{1}{\sqrt{K}} v \right)^2 = K u^2 + \frac{1}{K} v^2 - 2 u v 
\end{equation*}
It follows that:
\begin{equation}
\label{eq:u_plus_v_squared_K_relation}
(u + v)^2 = u^2 + 2 u v + v^2 \leq u^2 + K u^2 + \frac{1}{K} v^2 + v^2 = (1 + K) u^2 + \left(1 + \frac{1}{K}\right) v^2
\end{equation}
Using \eqref{eq:u_plus_v_squared_K_relation} in \eqref{eq:u_plus_v_relation}, we 
get:
\begin{equation}
\label{eq:u_and_v_main_relation}
(1 - \sigma) \beta_k^2 \left( u^2 + |v| \right)^{2-q_k} < (1 + K) u^2 + (1 + \frac{1}{K}) v^2  
\end{equation}

\vspace{3.mm}
Let us now consider case (2) where $q_k = 1$. We assume that $\beta_k > 1$ and 
derive a contradiction. Rearranging terms in \eqref{eq:u_and_v_main_relation} yields:
\begin{equation}
\label{eq:u_and_v_final_relation_case2}
\left( (1 - \sigma) \beta_k^2 - (1 + K) \right) u^2 < (1 + \frac{1}{K}) v^2 - (1 - \sigma) \beta_k^2 |v| = \left( (1 + \frac{1}{K}) |v| - (1 - \sigma) \beta_k^2  \right) |v| 
\end{equation}
Since we assume that $|\beta_k^2| > 1$, we can choose our $\sigma < 1$ 
small enough such that $(1 - \sigma) \beta_k^2 > 1$; once $\sigma$ is fixed, we can  
choose $K > 0$ small enough such that $(1 - \sigma) \beta_k^2 \geq (1 + K)$; 
with these choices of $\sigma$ and $K$, the left hand side 
of \eqref{eq:u_and_v_final_relation_case2} $\geq 0$. 
With this fixed choice of $\sigma$ and $K$ we analyze the right hand side of 
\eqref{eq:u_and_v_final_relation_case2}. Note that by Lemma 
\ref{lem:xndiffbounded_and_xnbounded}, we have that 
$\|x^{n_{l_r}} - x^{n_{l_r}-1}\|_2 \to 0$ as $r \to \infty$. This means that 
$|v| \to 0$ as $r \to \infty$. For sufficiently large $r$, we will have 
$|v| < \left(1 + \frac{1}{K}\right)^{-1} (1 - \sigma) \beta_k^2$, implying 
that the right hand side of \eqref{eq:u_and_v_final_relation_case2} would then be $\leq 0$.
This is in contradiction with the left hand side of this strict inequality 
\eqref{eq:u_and_v_final_relation_case2} being $\geq 0$. 
It follows that the assumption $|\beta_k| > 1$ is not correct. Hence, we have 
$|\beta_k| \leq 1$ which implies that $|\{A^T (b - A \overline{x})\}_k| \leq \lambda_k$, 
consistent with \eqref{eq:optcond}.

\vspace{3.mm}
Finally, consider case (3) with $q_k > 1$. We assume that $|\beta_k| > 0$ and 
derive a contradiction. In this case, 
\eqref{eq:u_and_v_main_relation} does not simplify further:
\begin{equation}
\label{eq:u_and_v_final_relation_case3}
\beta_k^2 (1 - \sigma) \left( u^2 + |v| \right)^{2-q_k} < (1 + K) u^2 + \left(1 + \frac{1}{K}\right) v^2 \quad \mbox{for all } K > 0 .
\end{equation}
This means in particular that:
\begin{eqnarray*}
\beta_k^2 (1 - \sigma) u^{2(2 - q_k)} &<& (1 + K) u^2 + \left(1 + \frac{1}{K}\right) v^2 \quad \mbox{and} \\ 
\beta_k^2 (1 - \sigma) |v|^{(2 - q_k)} &<& (1 + K) u^2 + \left(1 + \frac{1}{K}\right) v^2.
\end{eqnarray*}
Then the average of the terms is also smaller than this quantity:
\begin{equation*}
\frac{1}{2} \beta_k^2 (1 - \sigma) \left( u^{2(2 - q_k)} + |v|^{(2 - q_k)} \right) 
< (1 + K) u^2 + \left(1 + \frac{1}{K}\right) v^2.
\end{equation*}
Rearranging terms again, we have:
\begin{equation*}
u^{2(2 - q_k)} \left( \frac{1}{2} \beta_k^2 (1 - \sigma)  - (1 + K) u^{2 (q_k - 1)} \right) 
< \left(1 + \frac{1}{K}\right) v^2 - \frac{1}{2} \beta_k^2 (1 - \sigma) |v|^{(2 - q_k)}.
\end{equation*}
Since $q_k > 1$ and thus $2 - q_k < 1$, we have that for $v$ sufficiently small 
(obtained by taking $r$ sufficiently large), the right hand side is negative, 
by the same logic as in the previous case 
(because $|\beta_k| > 0$ by assumption, the first term will go to zero faster than the second
when $v \rightarrow 0$ as $r \rightarrow \infty$).
Thus, by the above inequality, for $r$ sufficiently large, the left hand side, 
bounded above by the negative right hand side, must be negative as well. 
Since $u^{2(2 - q_k)}$ is non-negative, that is possible only when:
\begin{equation*}
\frac{1}{2} \beta_k^2 (1 - \sigma)  - (1 + K) u^{2 (q_k - 1)} < 0
\end{equation*} 
for $r$ sufficiently large. However, since $\lim_{r \to \infty} u^{2 (q_k - 1)} = \lim_{r \to \infty} (x^{n_{l_r}-1}_k)^{2 (q_k - 1)}    = 0$, this condition cannot be 
satisfied for large $r$. This contradicts our original assumption that $|\beta_k| > 0$. 
Hence, we conclude that $\beta_k = 0$. It follows that $\{A^T(b - A \overline{x})\}_k = 0$, 
which is the right optimality condition. 
\end{proof}

Lemma \ref{lem:xnlrconvtominimizer}, together with the proceeding Lemmas in this 
section, show that properties (1) to (5) of Section \ref{subsect:argflow} hold. 
It thus follows from the argument in Section \ref{subsect:argflow} that 
we have $F(x^n) \to F(\bar{x})$.

\vspace{3.mm}
\section{Numerics}

We now discuss some aspects of the numerical implementation and performance 
of the IRLS algorithm. We first illustrate performance for the case 
$q_k = 1$ for all $k$, where it's easiest to compare with existing algorithms. 
Then we discuss a simple example concerning a case where different values of 
$q_k$ can be used. An implementation of the scheme as given 
by \eqref{eq:irls_scheme} has the same computational complexity as ISTA  
in \eqref{eq:ista_scheme}. Not surprisingly, the performance of the 
two schemes is also similar. However, our numerical experiments indicate 
that the speed-up idea behind FISTA 
as described in \cite{Beck2009} is also effective for the IRLS 
algorithm. FISTA was designed to minimize the function $f(x)+g(x)$,
where $f$ is a continuously differentiable convex function with Lipschitz continuous 
gradient (i.e., $\|\nabla f(x)-\nabla f(y)\|_2 \leq L \|x-y\|_2$ for some constant $L > 0$),
and $g$ is a continuous convex function such as $2\tau \|x\|_1$ in the 
$\ell_1$-penalized functional. FISTA uses the proximal mapping function:
\begin{equation*}
p_L(y) = \arg\min_x \left\{ g(x) + \frac{L}{2} \left\|x - (y - \frac{1}{L} \nabla f(y) )\right\|_2^2 \right\}
\end{equation*}
to define the following algorithm:
\begin{equation}
\label{eq:FISTA}
\begin{array}{rcl}
 y^1 &=& x^0 \in \mathbb{R}^N \quad \mbox{,} \quad t_1 = 1 \quad \mbox{,} \quad \mbox{and for  } n=1,2,\dots, \\
\displaystyle x^{n+1} &=& 
\displaystyle p_L(y^n) = 
\arg\min_x \left\{ g(x) + 
\frac{L}{2} \left\|x - (y^n - \frac{1}{L} \nabla f(y^n) )\right\|^2 \right\} \\
 \displaystyle t_{n+1} &=& \displaystyle \frac{1 + \sqrt{1 + 4 t_n^2}}{2} \\
 \displaystyle y^{n+1} &=& \displaystyle x^{n+1} + \frac{t_n-1}{t_{n+1}} (x^{n+1} - x^{n}).
\end{array}
\end{equation}
In the case that $f(x) = \|Ax - b\|_2^2$ and $g(x) = 2 \tau \|x\|_1$, we obtain:
\begin{eqnarray*}
\|\nabla f(x) - \nabla f(y)\|_2 &=& \|2 A^T A x - 2 A^T A y\|_2 = \|2 A^T A (x - y)\|_2 \\ 
&\leq& 2 \|A^T A\|_2 \|x - y\|_2,
\end{eqnarray*} 
which implies that when $A$ is scaled such that $\|A\|_2 \approx 1$, the Lipschitz 
constant can be taken to be $L = 2$. It follows that:
\begin{equation}
\label{eq:FISTAg}
g(x) + \frac{L}{2} \left\|x - (y - \frac{1}{L} \nabla f(y) )\right\|_2^2 = 2 \tau \|x\|_1 + \left\|x - (y - A^T (Ay - b) )\right\|_2^2 .
\end{equation}
Using \eqref{eq:FISTAg} in \eqref{eq:FISTA}, we obtain:
\begin{equation}
\label{eq:FISTAxnp1}
x^{n+1} = \arg\min_x \left\{  2 \tau \|x\|_1 + \left\|x - (y^n - A^T (Ay^n - b) )\right\|_2^2  \right\} = \mathbb{S}_{\tau} \left( y^n - A^T Ay^n + A^T b \right),
\end{equation}
where we have used \eqref{eq:soft_thresholding_min_def}. We note that 
\eqref{eq:FISTAxnp1} is very similar to the ISTA scheme in \eqref{eq:ista_scheme}, 
except the thresholding is applied to $\{y^n\}$. In the same way, we can 
coin the FIRLS algorithm by performing the steps in \eqref{eq:FISTA}, using 
\begin{equation}
\label{eq:FIRLSxnp1}
x^{n+1} = \frac{1}{1 + \tau \left[ (y^n_k)^2  + (\epsilon_n)^2 \right]^{\frac{1}{2}}} \left\{ y^n - A^T Ay^n + A^T b \right\}_k \quad \mbox{for} \quad k=1,\dots,N
\end{equation}
in place of \eqref{eq:FISTAxnp1}. With the more general weights given by  
\eqref{eq:irls_scheme_weights}, we can specialize this algorithm to our 
functional \eqref{eq:ellq_funct}.

We now demonstrate some results of simple numerical experiments. 
We begin with the $q_k = 1$ case for all $k$. We also let the regularization 
parameter be the same for all $k$, setting $\lambda_k = \tau$.
For the first test, we use two differently conditioned random matrices 
(built up via a reverse
SVD procedure with orthogonal random matrices $U$ and $V$, obtained 
by performing a QR factorization on the Gaussian random matrices, and a custom 
diagonal matrix of singular values $S$, to form a $1000 \times 1000$ matrix $A = U S V^T$), 
and a sparse signal $x$ with $5\%$ non-zeros. We form $b = A x$ and 
use the different algorithms to recover $\tilde{x}$ using a single 
run of 300 iterations with $\tau = \frac{\max{|A^T b|}}{10^5}$. 
In Figure \ref{fig:numerics1}, we plot the decrease of 
$\ell_1$-functional values $F_1(x^n)$ and recovery percent errors 
$100 \frac{\|x^n - x\|}{\|x\|}$ versus the iterate number $n$, using 
four algorithms: IRLS, FIRLS, ISTA, and FISTA for two matrix types: $A_1$, with 
singular values logspaced between $1$ and $0.1$ and $A_2$, 
with singular values logspaced between $1$ and $10^{-4}$. 
We see that the performance of ISTA/IRLS and FISTA/FIRLS are mostly similar, with better 
recovery using FISTA in the well-conditioned case, but almost identical performance in 
the worst-conditioned case. 

In Figure \ref{fig:numerics2}, we run a compressive sensing experiment. 
We again take the $1000\times 1000$ matrix of type $A_2$. Now we use a staircase-like 
sparse vector $x$ with about $12 \%$ non-zeros.
After we form $b = A x$, we zero out all but the first $\frac{1}{3}$ of 
the rows of $A$ and $b$ forming $A_p$ and $b_p$ (i.e. we only keep a portion of the 
measurements). We then recover solution 
$\tilde x$ using $A_p$ and $b_p$ while employing a continuation scheme across 
$20$ different values 
of $\tau$, starting with a zero initial guess at $\tau = \max{|A^T b|}$ 
and proceeding down to $\tau = \frac{\max{|A^T b|}}{50000}$, while reusing 
the previous solutions as the initial guess at each new value of $\tau$. 
From Figure \ref{fig:numerics2}, we can see that the recovered solutions with 
FIRLS and FISTA are very similar.

We illustrate the use of the more general functional 
in \eqref{eq:ellq_funct} in Figure \ref{fig:numerics3}. We use the 
same setup as before, with the different algorithms running across 
multiple values of the regularization parameter $\tau$, which is fixed 
for all $k$. However, we use a more complicated input signal, whose first 
half is sparse and whose second half is entirely dense. For this reason, 
in the IRLS schemes, we take $q_k = 1$ for the first half of the weights 
(for indices $k$ from $1$ to $\frac{n}{2}$) and $q_k = 1.9$ for the second half 
(for indices $k$ from $\frac{n}{2}+1$ to $n$). We observe that the recovered signal 
with the IRLS algorithms is superior to that of the ISTA/FISTA schemes 
which utilize $q_k = 1$ for all entries. Of course, setting the values 
of $q_k$ for individual coefficients maybe difficult in practice unless one knows 
the distribution of the sparser and denser parts in advance, although in applications, 
some information of this nature may be available from the setup of the problem.  

Finally, in Figure \ref{fig:numerics4}, we show the result of an image 
reconstruction experiment. We use two images, blurred with a Gaussian source and 
corrupted by Gaussian noise. The first image is $170\times120$ and the second 
is $125\times125$. In both cases, the blurring source is a 2D Gaussian function 
with support on a $9\times9$ grid with $\sigma = 2.5$ and max amplitude of $2.9$. 
We then take the blurred image (obtained via convolution with the blurring source) 
and add white Gaussian 
noise, so that the signal to noise ratio is $25$. We then recover a corrected image 
using an application of wavelet denoising followed by IRLS, from the blurred and 
noisy image. The IRLS algorithms is run over $30$ parameters $\tau$ with 
$40$ iterations each, in a setup similar to that used for Figure \ref{fig:numerics2}. 
The matrices we use in the inversion are derived from the blur source itself, 
so this is a non-blind deconvolution. The problem, however, is still challenging and the 
resulting images are much improved from their blurred and noisy counterparts. 
We have noticed that the use of $q_k < 1$ can yield, in some instances, slighter 
sharper reconstructions in the same number of iterations.

\newpage
\begin{figure}[!ht]
\centerline{
\includegraphics[scale=0.2]{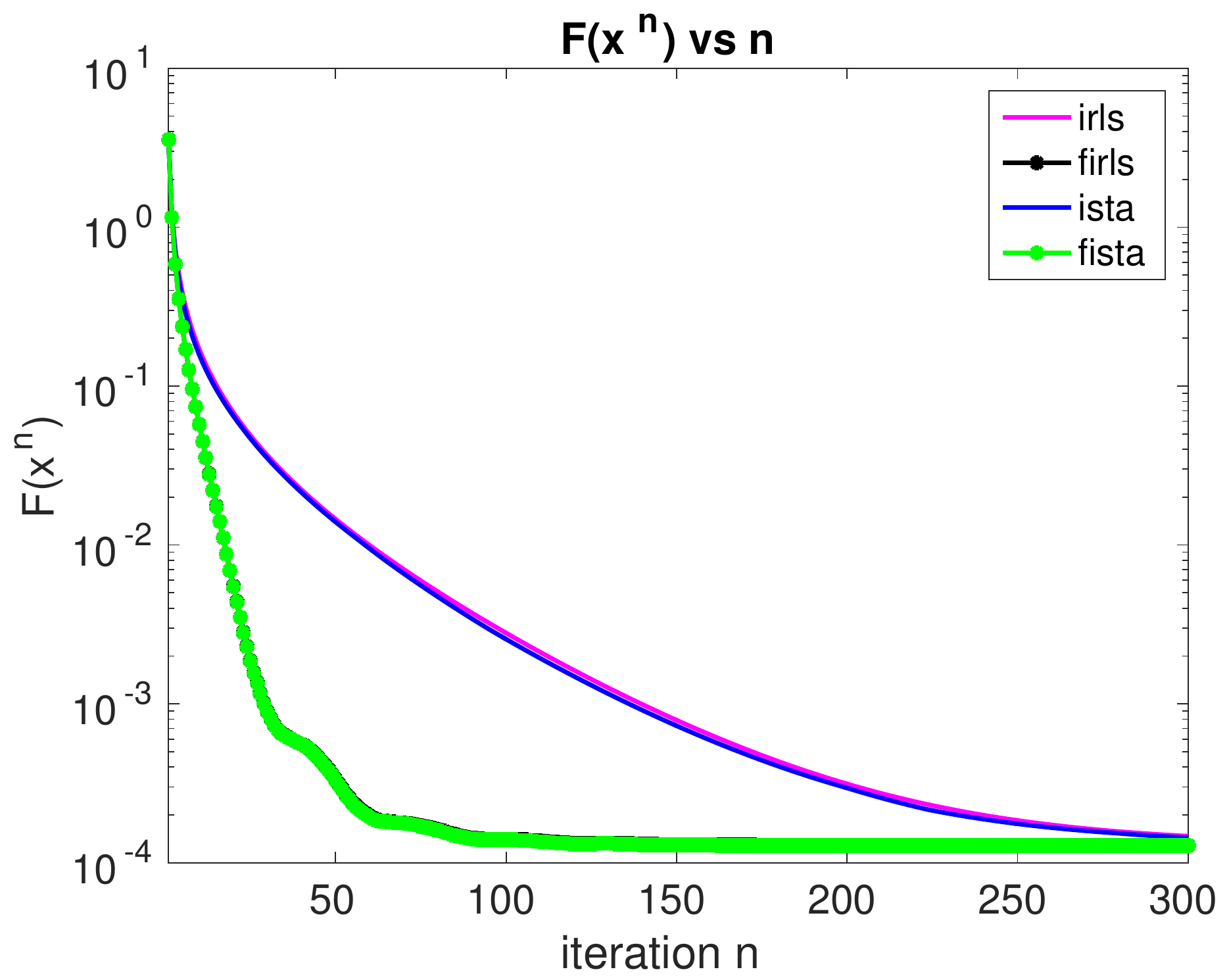}
\includegraphics[scale=0.2]{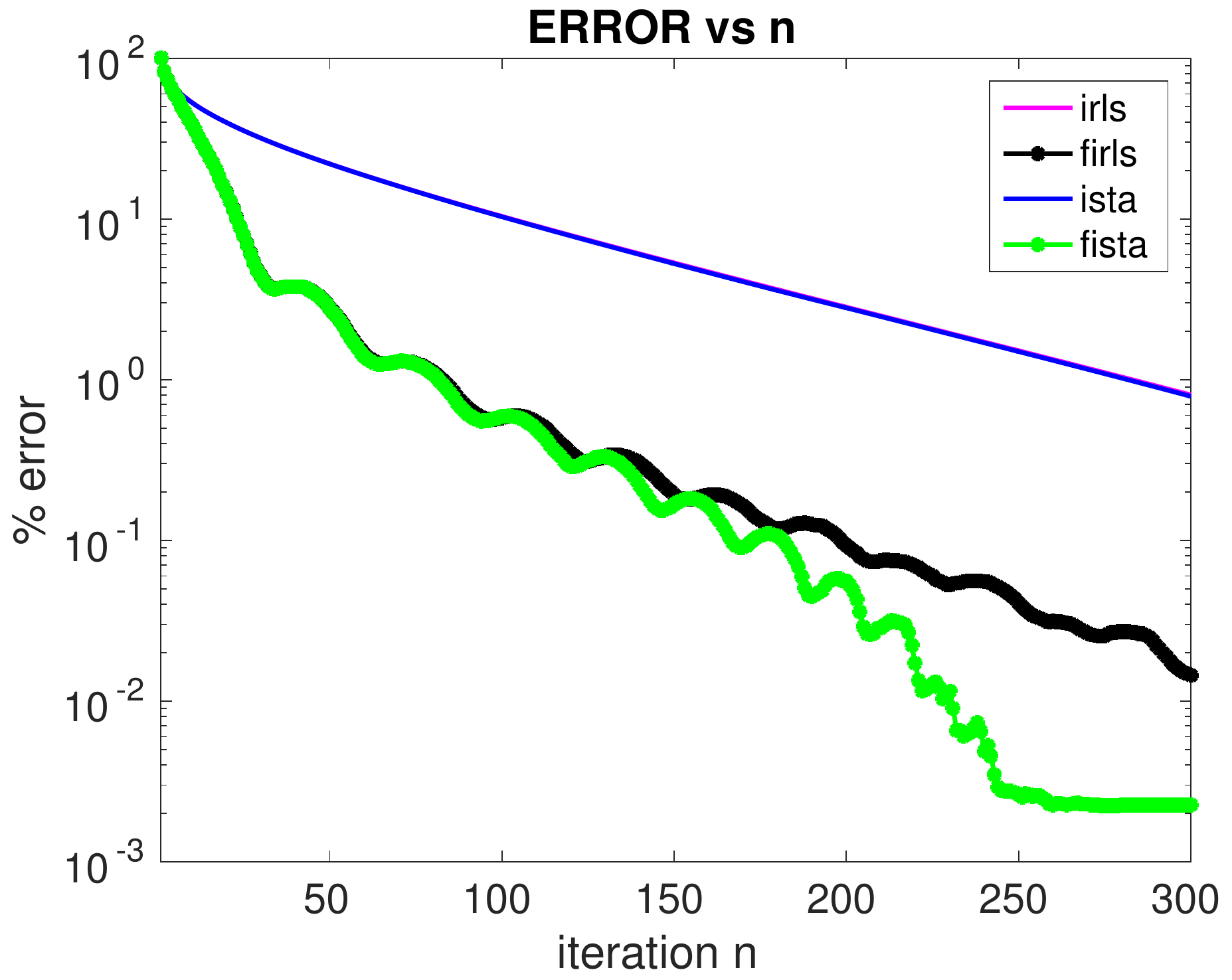}
\quad
\includegraphics[scale=0.2]{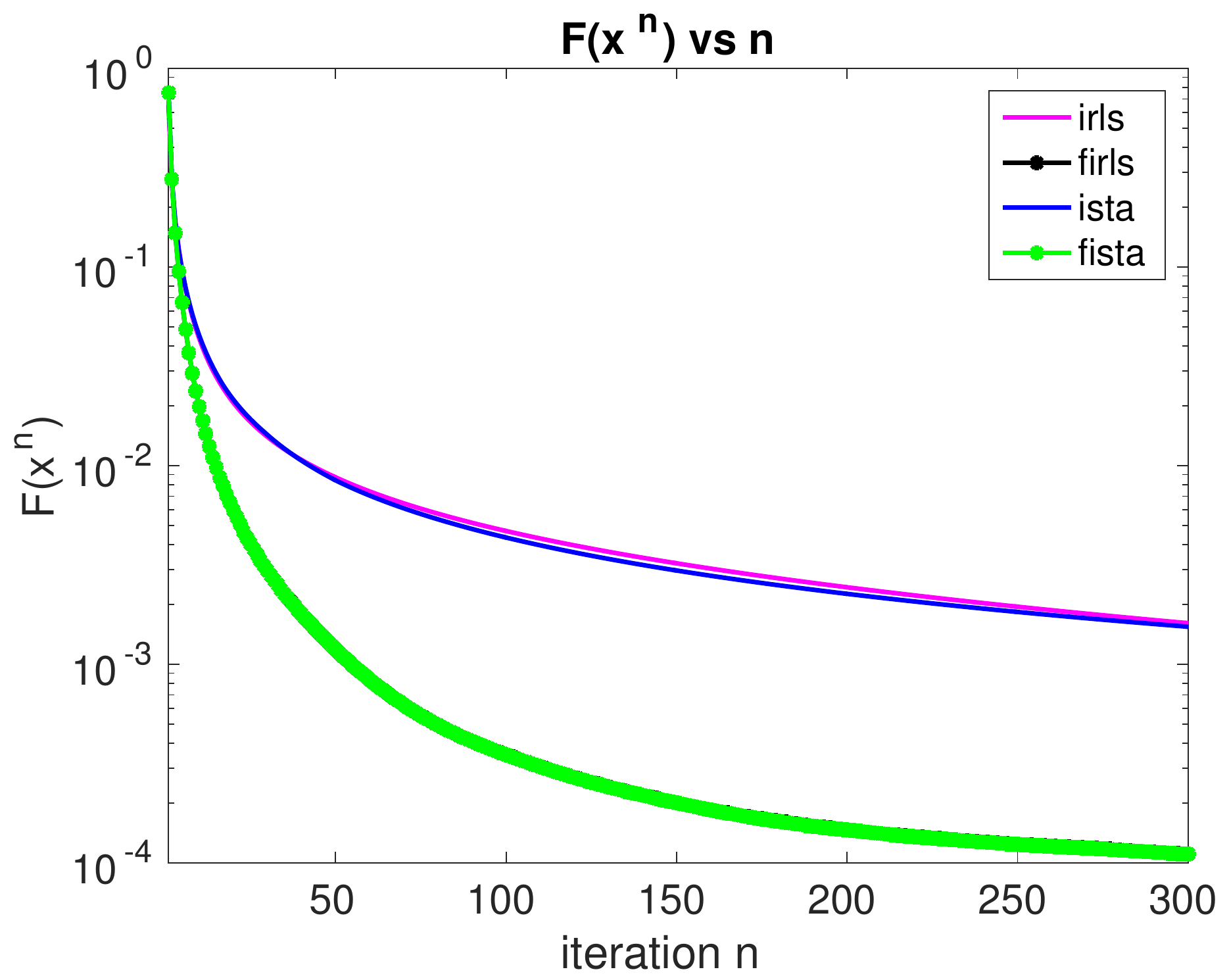}
\includegraphics[scale=0.2]{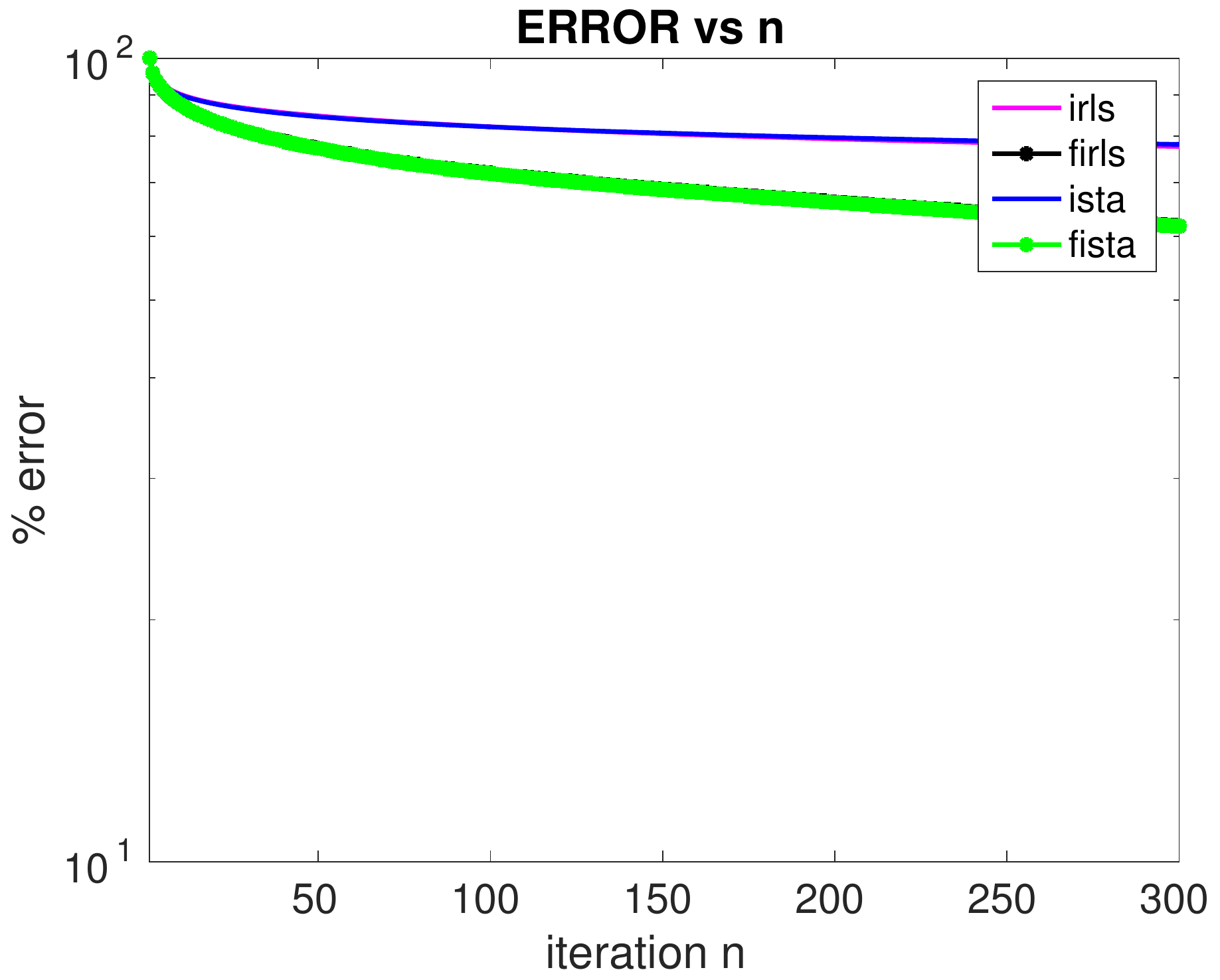}
}
\caption{Functional values $F(x^n)$ and recovery percent errors  
$100 \frac{\|x^n - x\|}{\|x\|}$ versus the iterate number $n$ for better and 
worse conditioned matrices (medians over 10 trials).}
\label{fig:numerics1}
\end{figure}

\vspace{10.mm}

\begin{figure*}[ht!]
\centerline{
\includegraphics[scale=0.16]{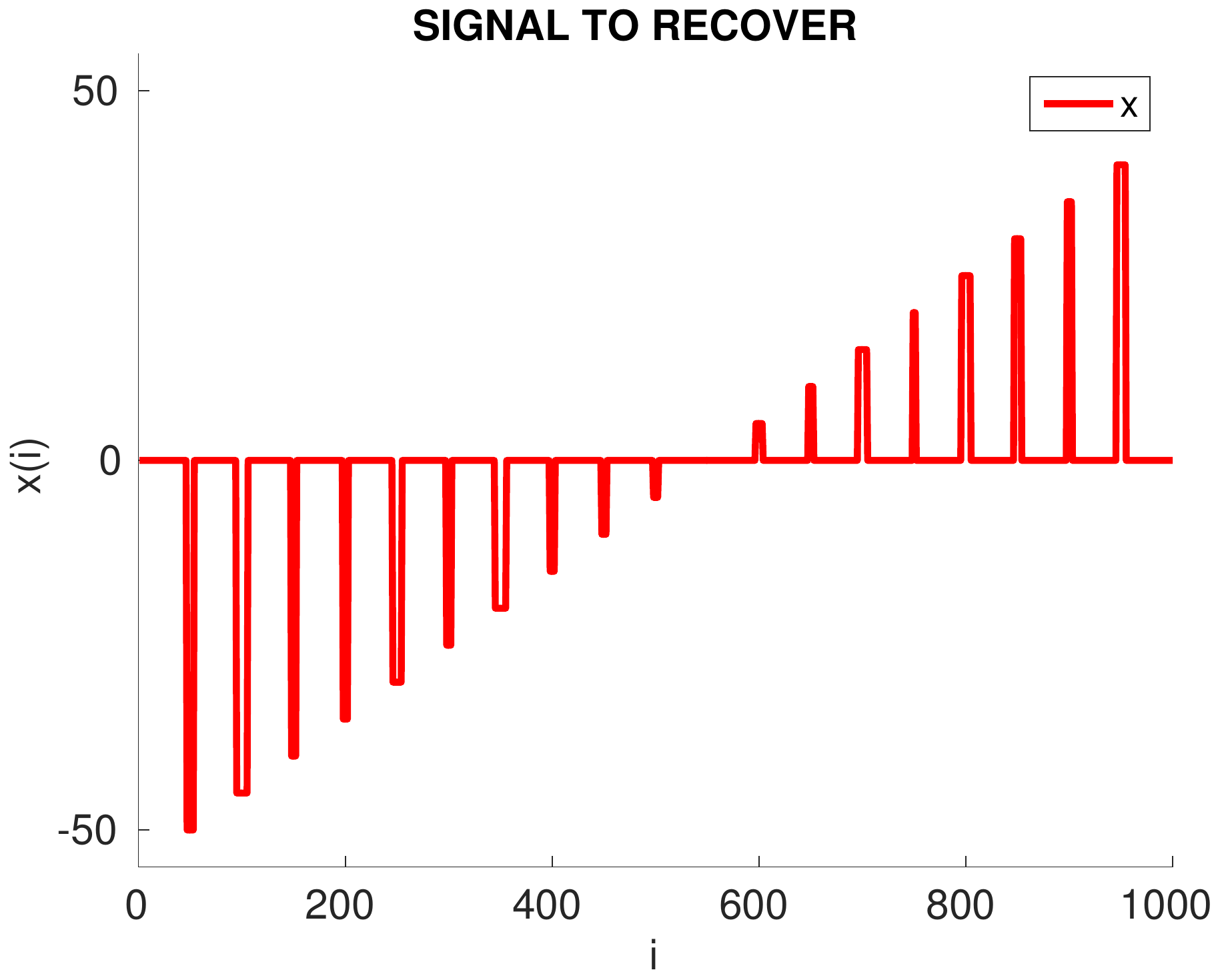}
\includegraphics[scale=0.16]{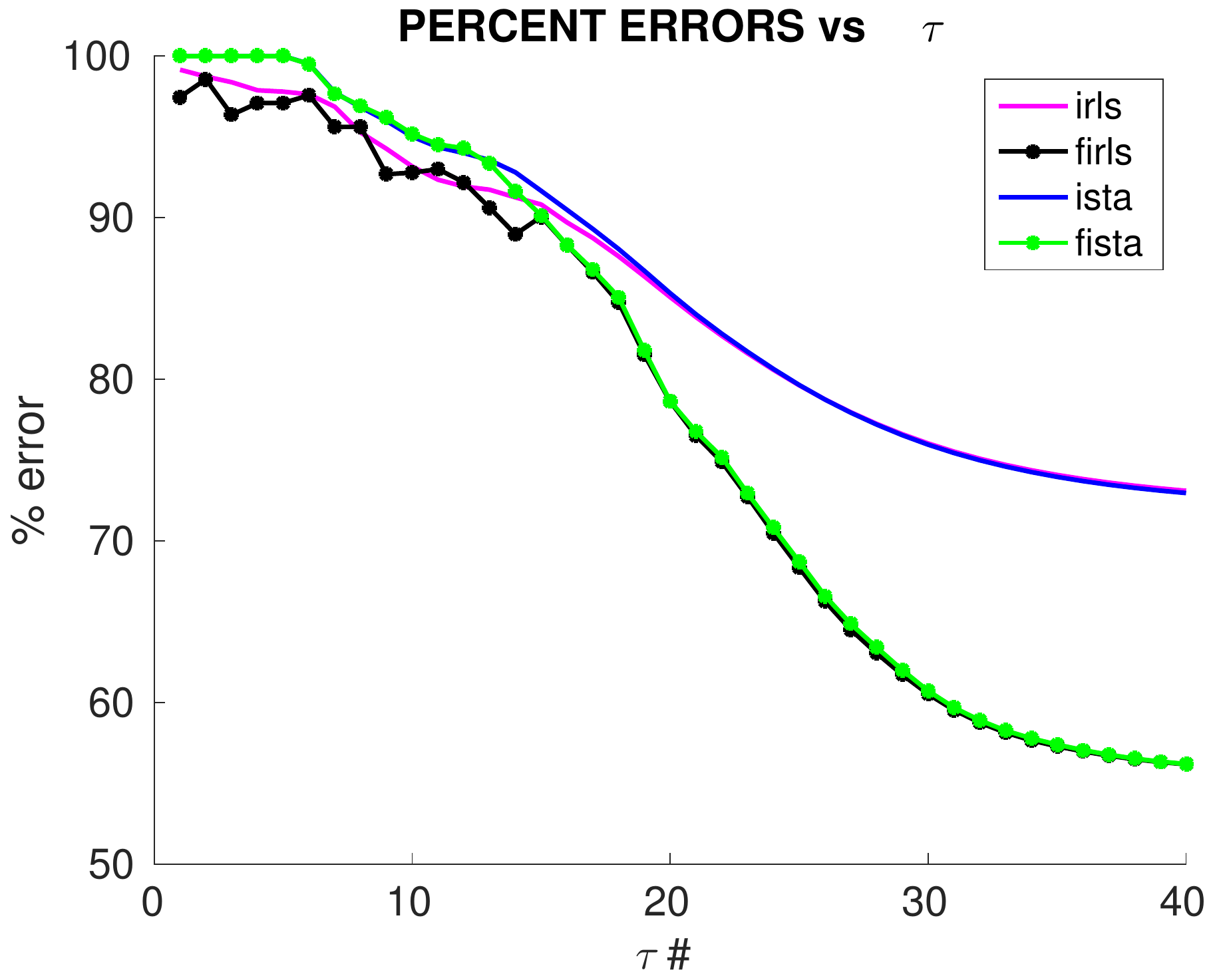}
}
\centerline{
\includegraphics[scale=0.16]{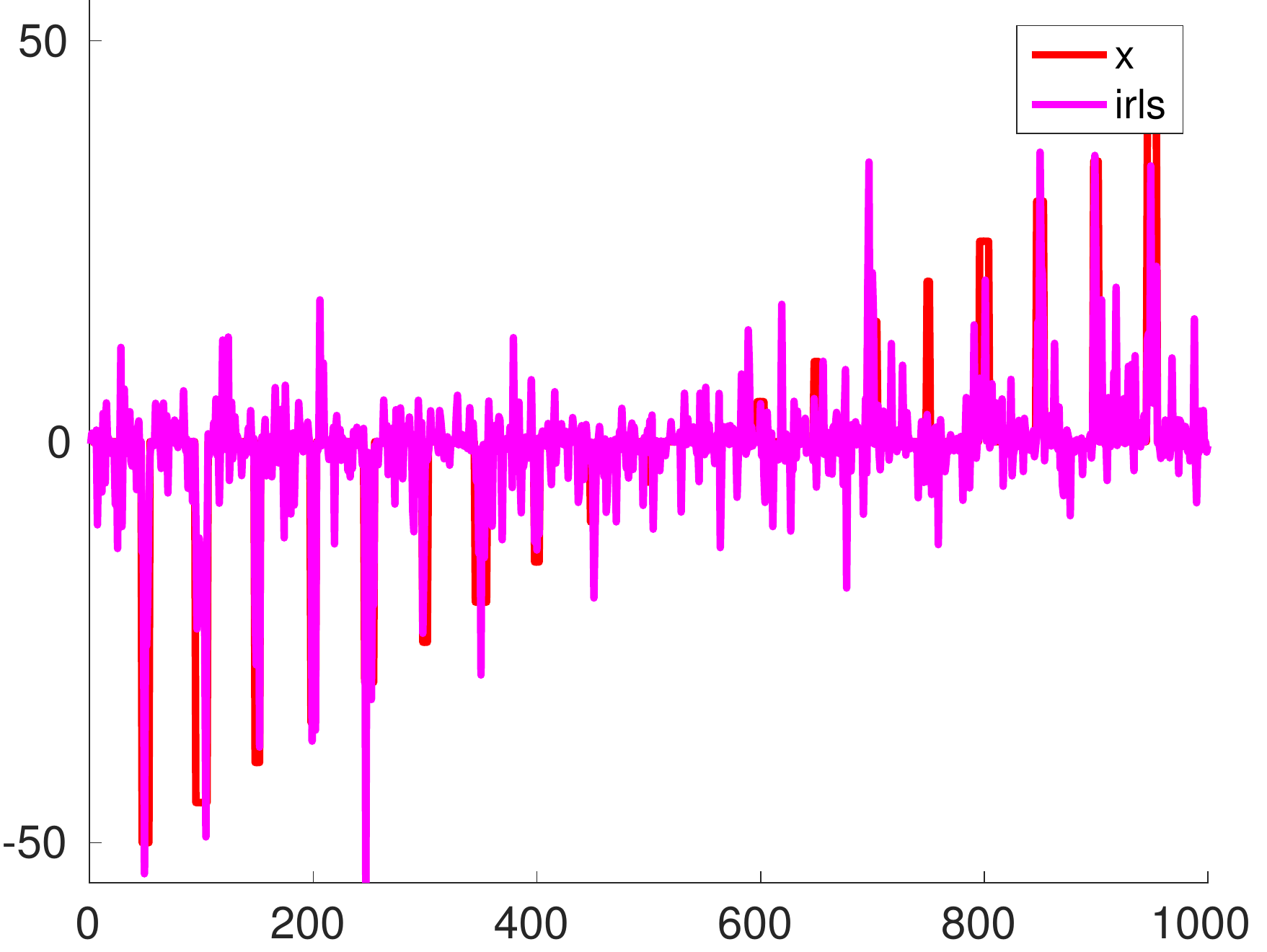}
\includegraphics[scale=0.16]{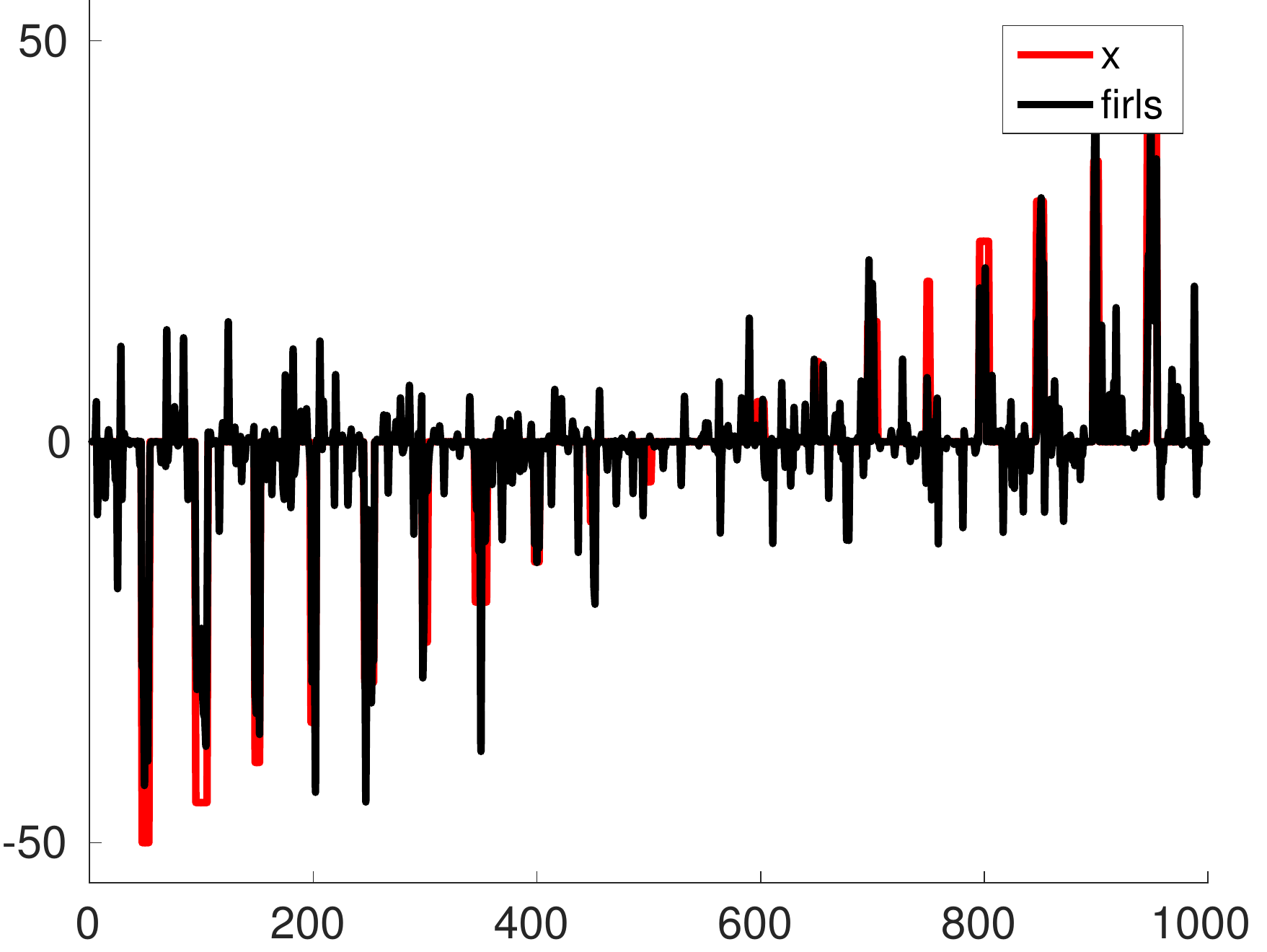}
\includegraphics[scale=0.16]{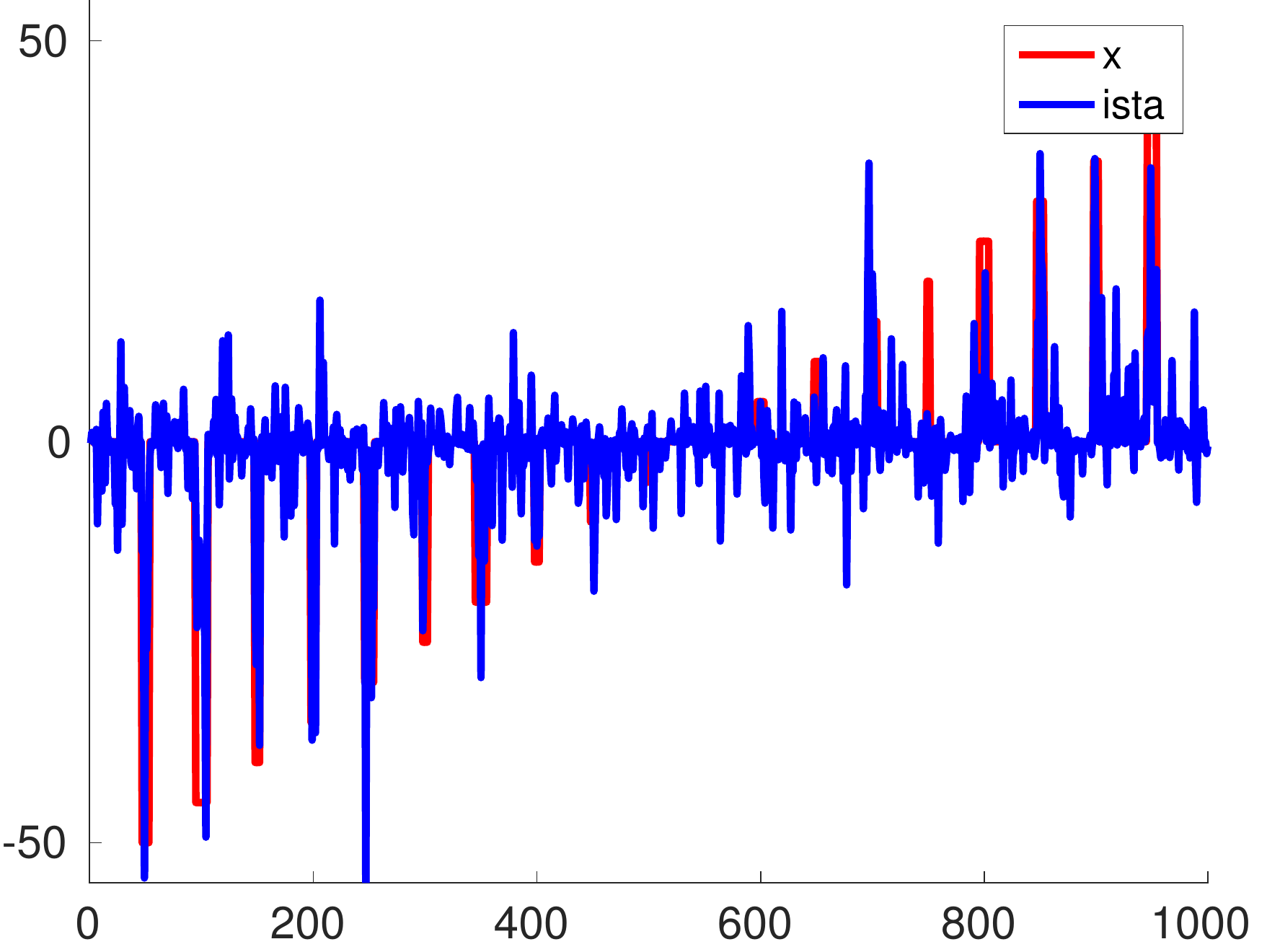}
\includegraphics[scale=0.16]{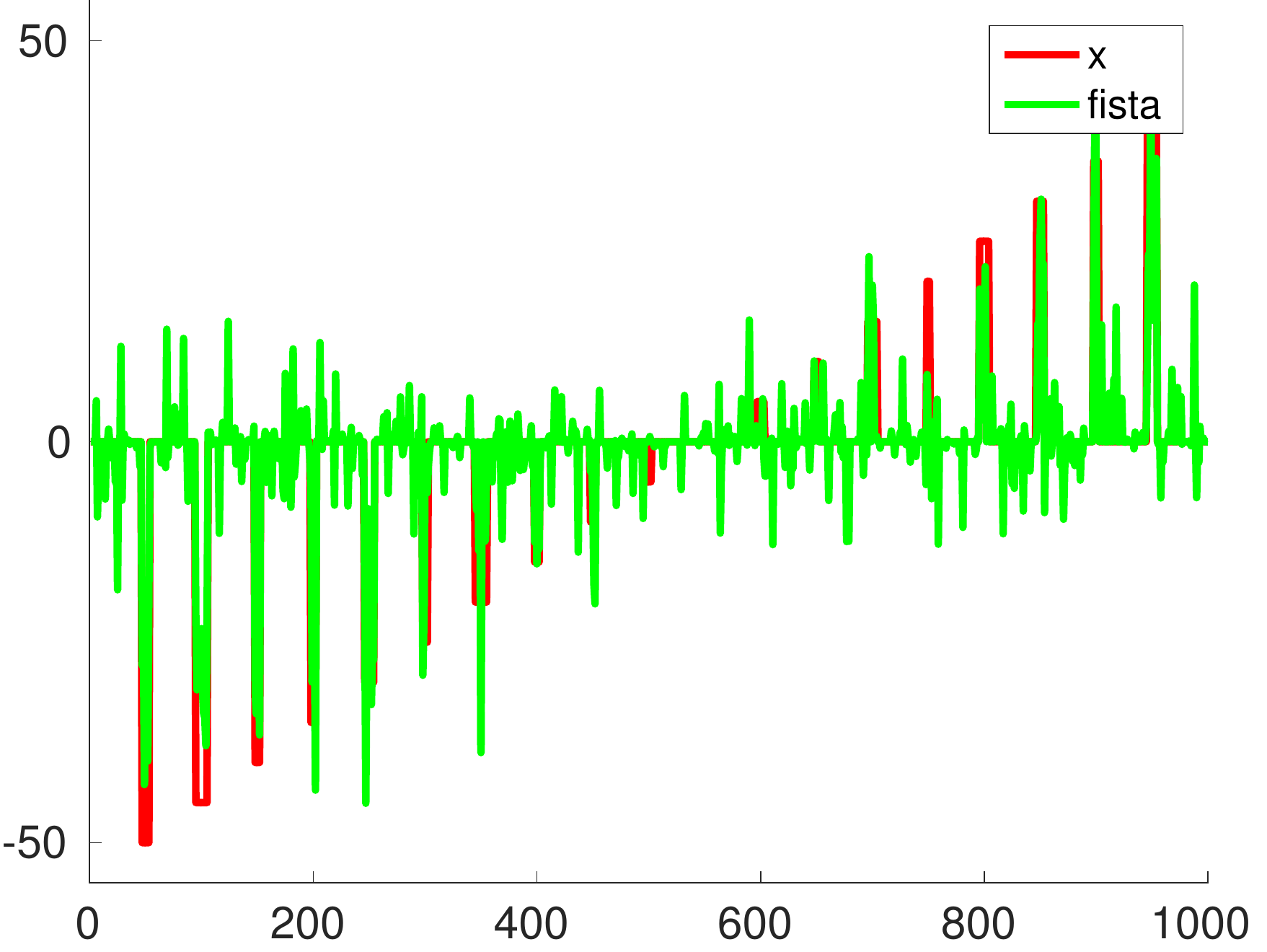}
}
\caption{Row 1: sparse model $x$ and the recovery percent errors vs $\tau$. 
Row 2: final recovered solution with algorithms IRLS, FIRLS, ISTA, FISTA vs $x$.}
\label{fig:numerics2}
\end{figure*}

\vspace{10.mm}

\begin{figure*}[ht!]
\centerline{
\includegraphics[scale=0.16]{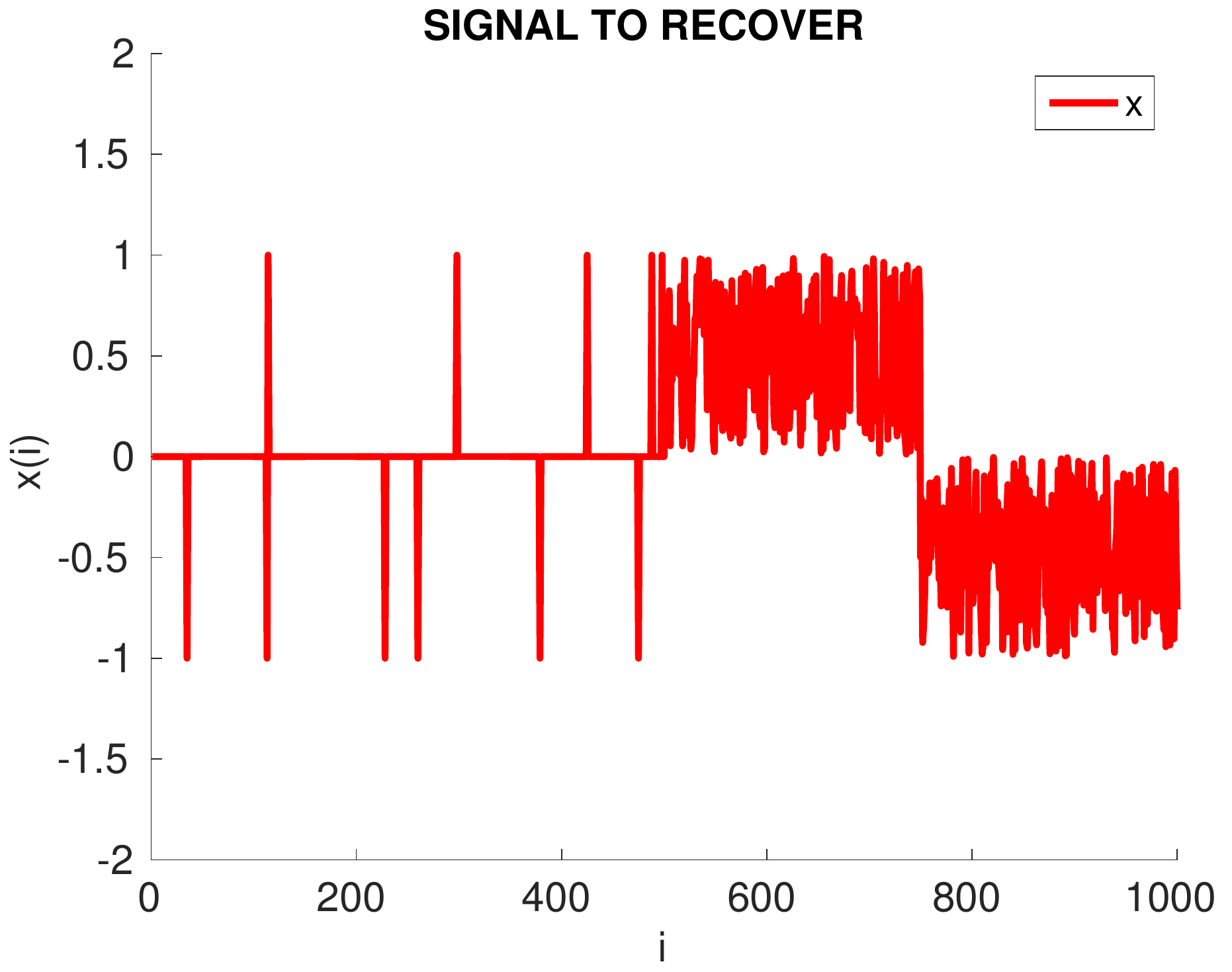}
\includegraphics[scale=0.16]{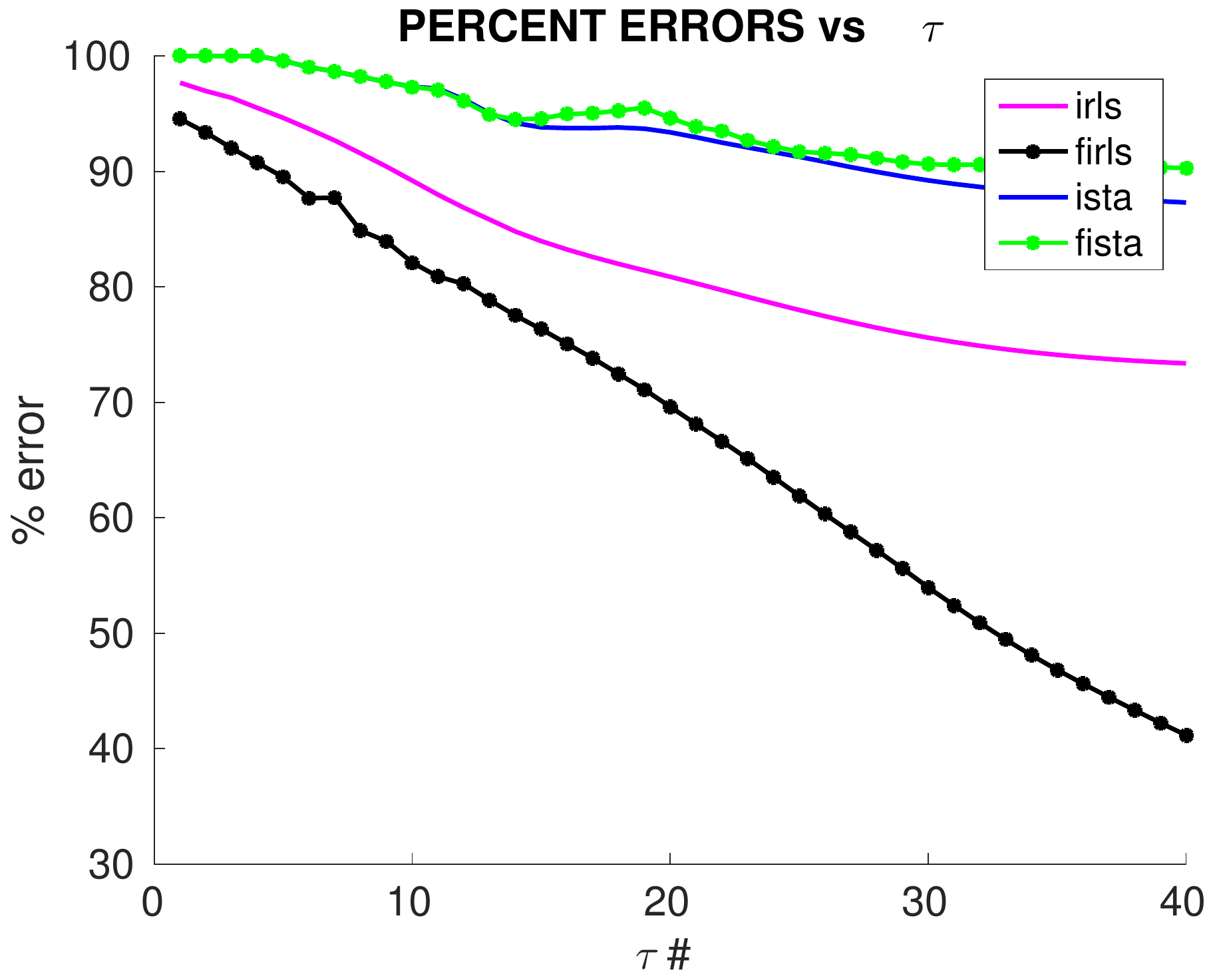}
}
\centerline{
\includegraphics[scale=0.16]{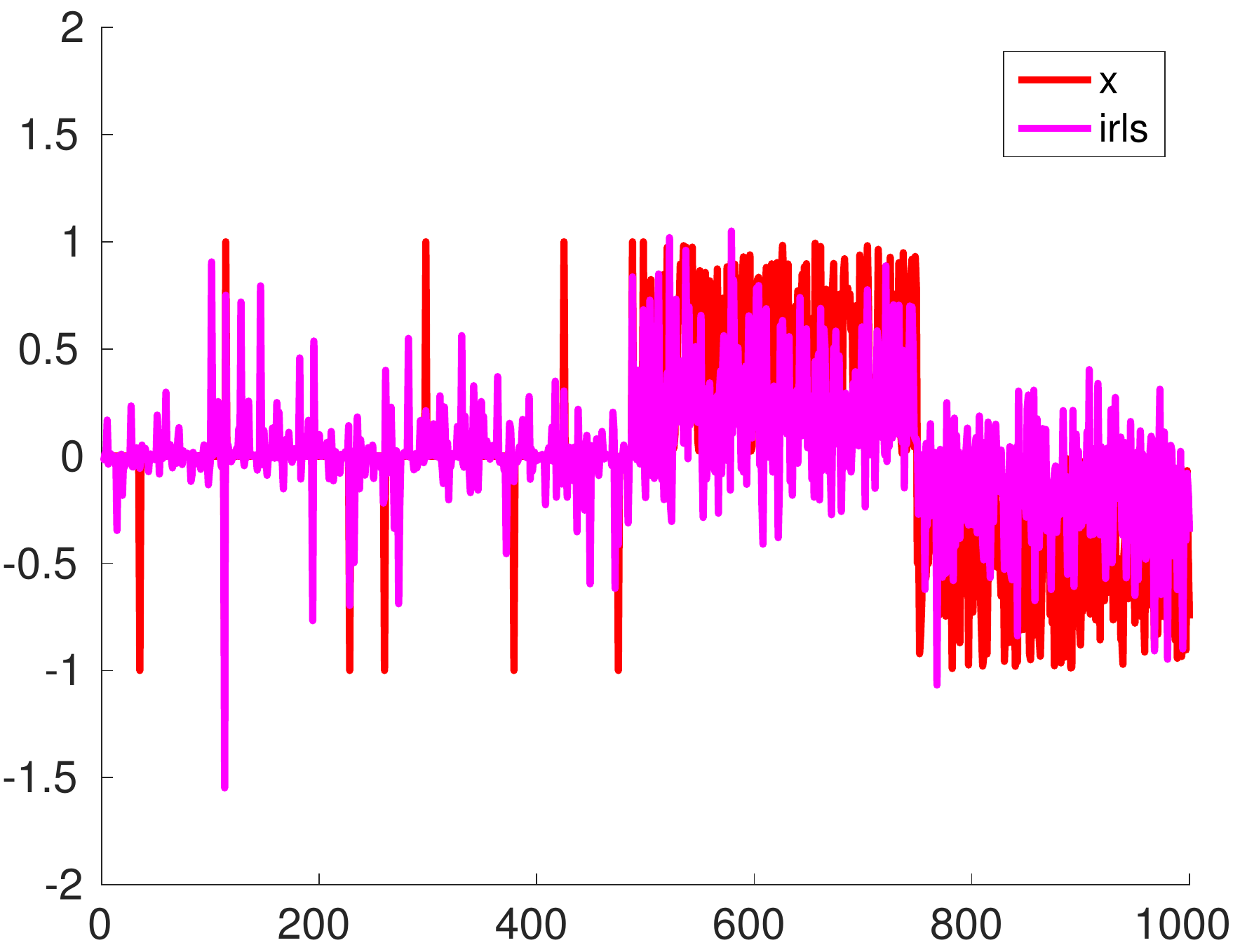}
\includegraphics[scale=0.16]{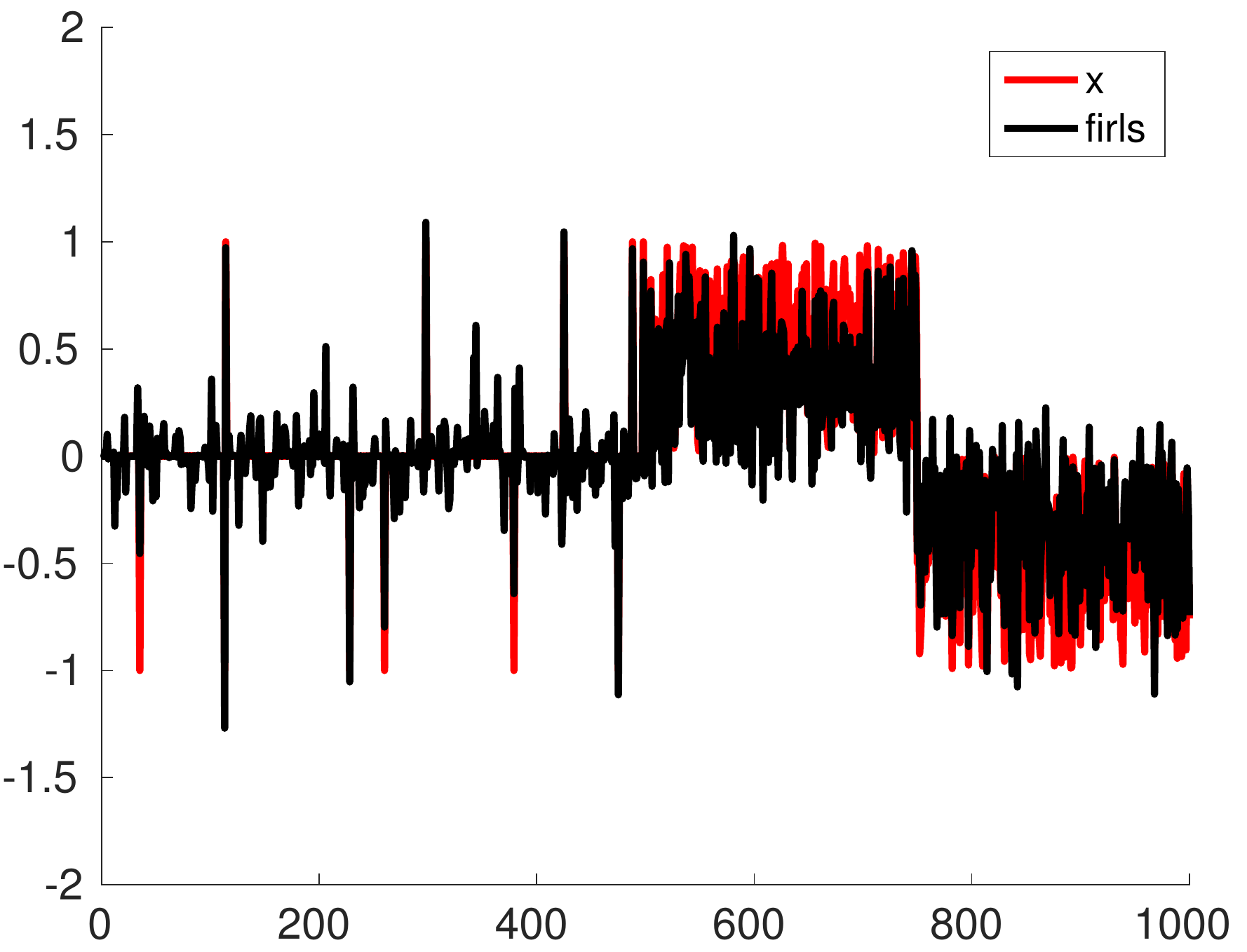}
\includegraphics[scale=0.16]{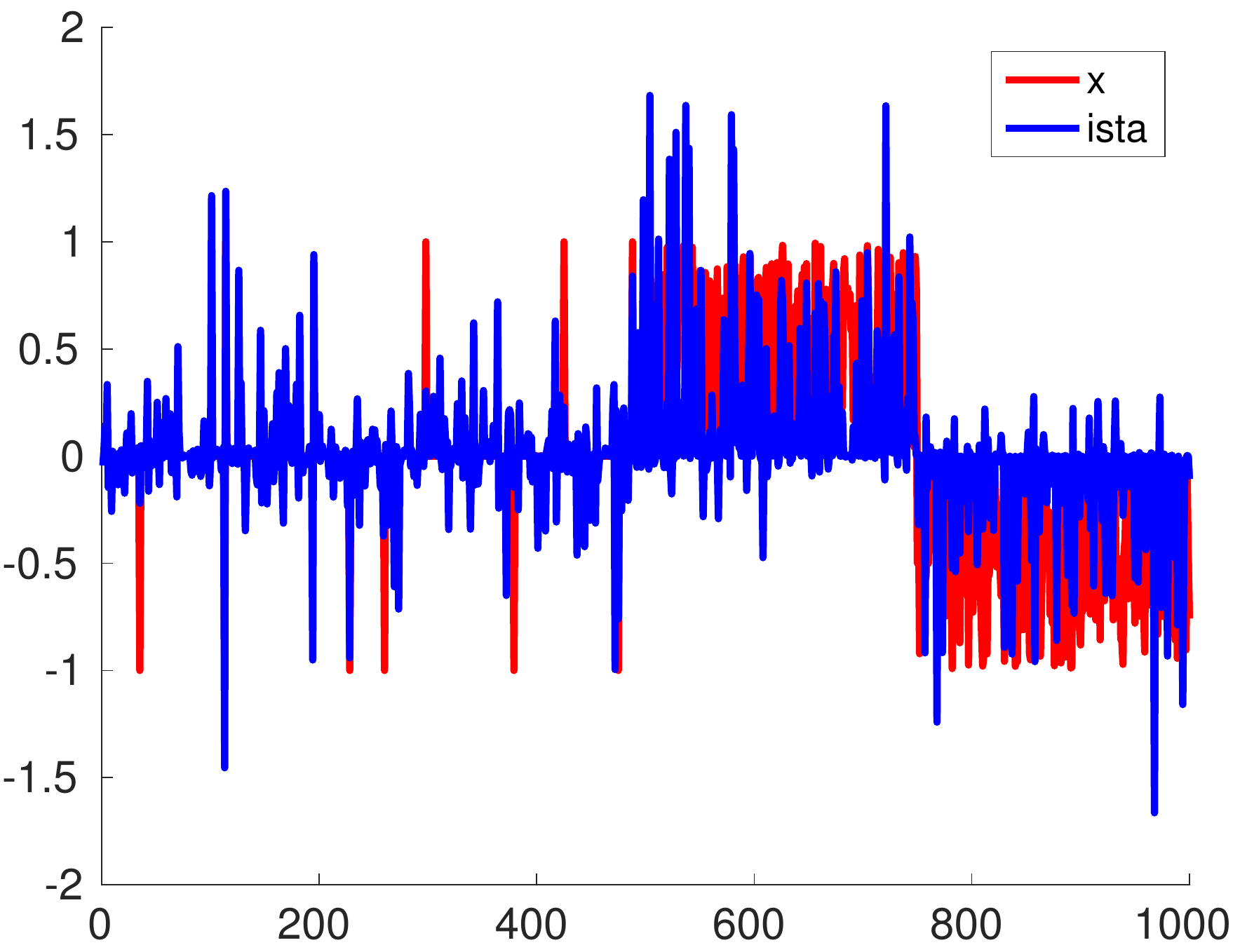}
\includegraphics[scale=0.16]{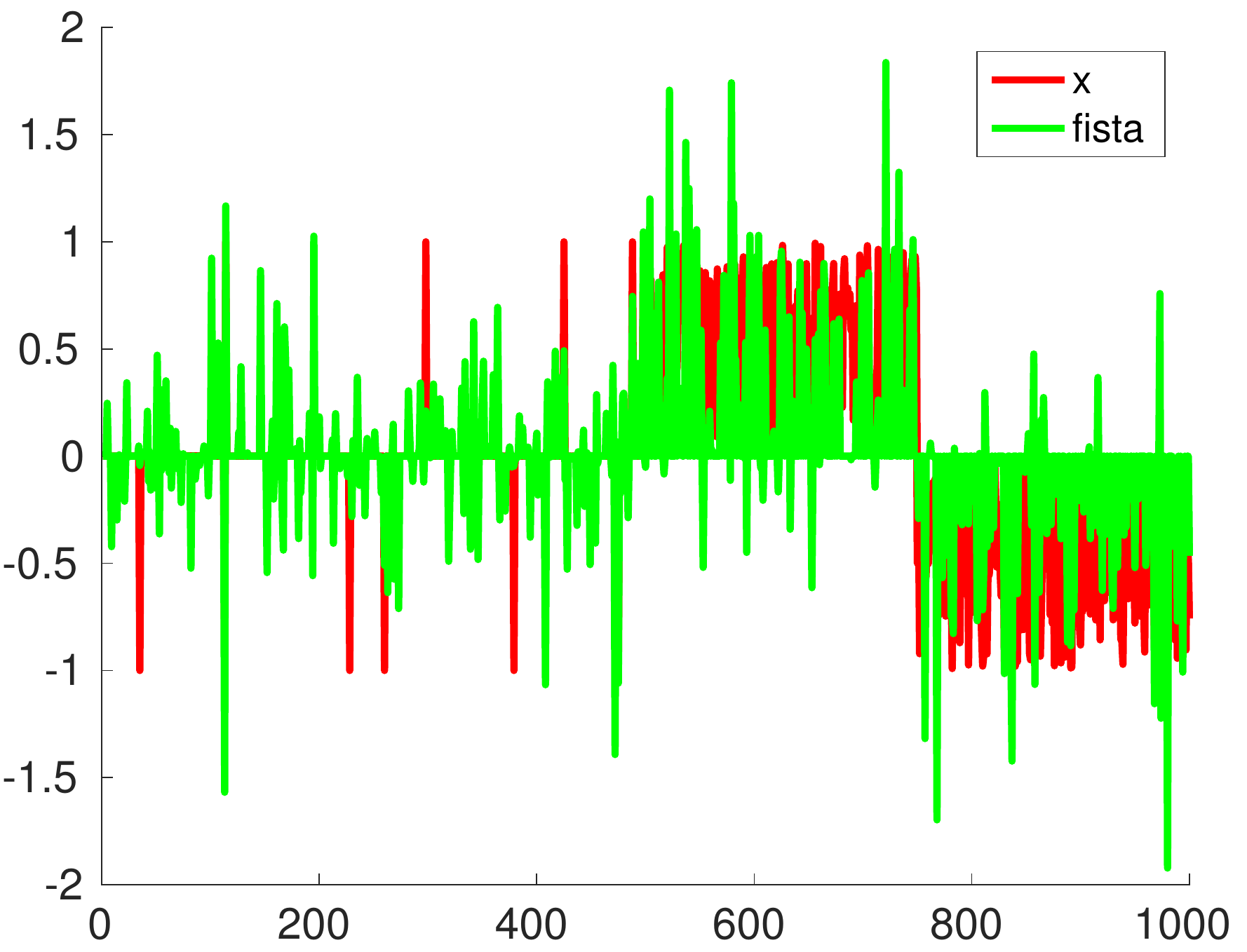}
}
\caption{Row 1: half sparse / half dense model $x$ and the recovery percent 
errors vs $\tau$. 
Row 2: final recovered solution with algorithms IRLS, FIRLS, ISTA, FISTA vs $x$.}
\label{fig:numerics3}
\end{figure*}

\newpage

\begin{figure*}[ht!]
\centerline{
\includegraphics[scale=0.6]{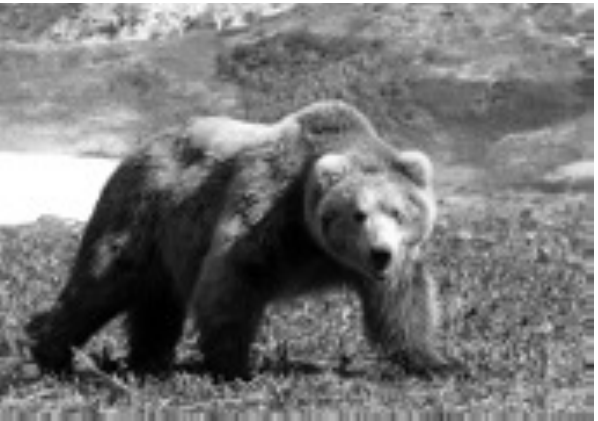}
\includegraphics[scale=0.6]{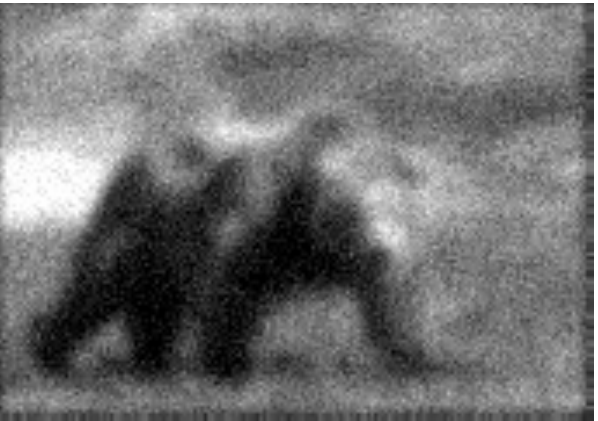}
\includegraphics[scale=0.6]{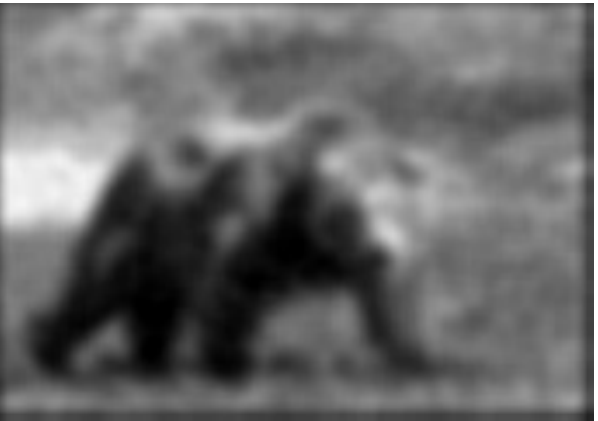}
\includegraphics[scale=0.6]{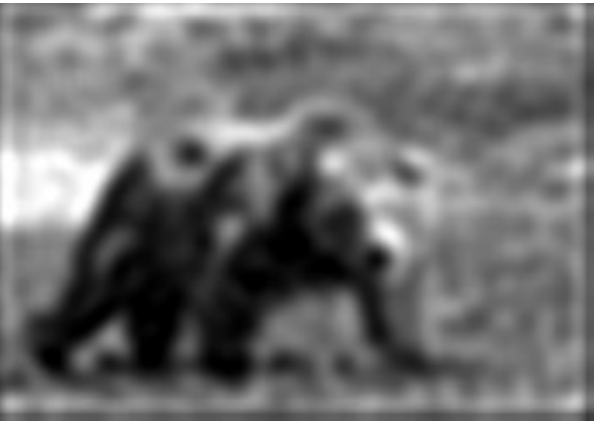}
}
\centerline{
\includegraphics[scale=0.6]{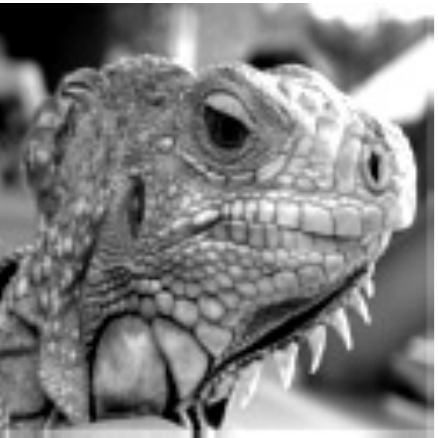}
\includegraphics[scale=0.6]{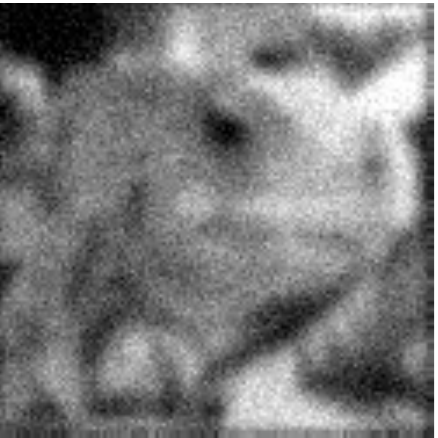}
\includegraphics[scale=0.6]{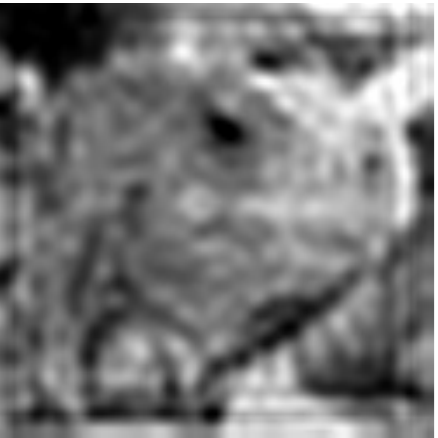}
\includegraphics[scale=0.6]{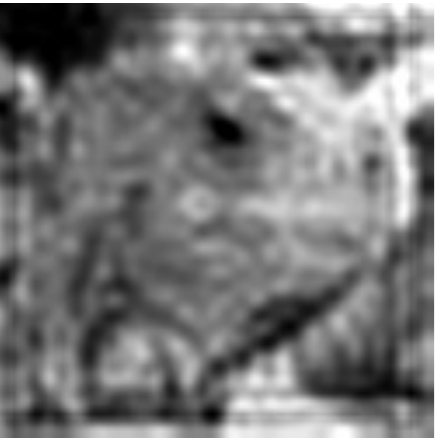}
}
\caption{Two examples of blurred and noisy image reconstruction with IRLS. In 
each row from left to right: original image, blurred and noisy image, and two 
reconstructions with IRLS using $q = 1$ and $q = 0.85$.\label{fig:numerics4}}
\end{figure*}

\vspace{3.mm}
\section{Conclusions}
This manuscript presents a new iterative algorithm for obtaining regularized 
solutions to least squares systems of equations with sparsity constraints. The proposed 
iteratively reweighted least squares algorithm extends the work of 
\cite{daubechies2010iteratively} and is similar in form to the popular ISTA and 
FISTA algorithms \cite{ingrid_thresholding1,Beck2009}; it has the added 
benefit of being able to minimize a more general sparsity promoting functional. 
The main contribution of this work is the analysis of the algorithm, relying on
matching the approximation rate to the original functional
of a smoothened surrogate functional to the speed of convergence of the iterates;
this methodology can likely also be applied to other situations. The presented IRLS 
algorithm \eqref{eq:irls_scheme} is very simple to implement and use; it offers 
performance similar to popular thresholding schemes, including the speedup benefit 
from the FISTA formulation. Because the surrogate functionals are all quadratic in the 
$x_k$, they lend themselves naturally to the use of a conjugate gradient approach, 
which enables further speed-up, as shown elsewhere 
\cite{sv_thesis,2015arXiv150904063F}.

\vspace{5.mm}
\bibliographystyle{plain}
\bibliography{master}

\end{document}